\theoremstyle{plane}
\newtheorem{theorem}{Theorem}[section]
\newtheorem{lemma}[theorem]{Lemma}
 \newtheorem{corollary}[theorem]{Corollary}
        \newtheorem{theorem*}{Theorem}
    \newcounter{relctr} 
\everydisplay\expandafter{\the\everydisplay\setcounter{relctr}{0}} 
\theoremstyle{definition}
 \newtheorem*{definition*}{Definition}
 \newtheorem{example}[theorem]{Example}
 \newtheorem*{data availability}{Data Availability}
 \newtheorem*{conflict}{Conflict of Interest}
\theoremstyle{remark}
\newtheorem{remark}[theorem]{Remark}
\newtheorem*{acknowledgments}{Acknowledgments}
\numberwithin{equation}{section}
\begin{document}
  \title[Euler Characteristics of Brill-Noether Loci on Prym Varieties]{Euler Characteristics of Brill-Noether Loci on Prym Varieties}
  \author{Minyoung Jeon}
\address{Department of mathematics, University of Georgia, Athens GA 30602, USA}
\email{minyoung.jeon@uga.edu}
\subjclass[2020]{Primary 14H51, 14M15; Secondary 19E20, 05A19} 
\keywords{Prym varieties, Brill-Noether loci, Euler Characteristic}

   \begin{abstract}
In this article we compute formulas for the connected K-theory class of the pointed Brill-Noether loci in Prym varieties, which extend the result by De Concini and Pragacz. Applying the formulas, we compute the holomorphic Euler Characteristics of the loci.
 
         \end{abstract}



   \date{\today}


   \maketitle


\section{Introduction}

Historically, Leonhard Euler introduced the Euler characteristic for convex polyhedra in 1752, based on a paper about the notion of a graph in 1736.  His consideration for a graph from the geometry of position led to define the Euler characteristic for an arbitrary finite cell-complex. The Euler characteristic was furthermore generalized by Poincar\'{e} in the early 20th century, and turned out to be a topological invariant of a space. The Euler-Poincar\'{e} formula for the Euler characteristic of a topological space uses the so-called Betti numbers of the space. Since then, there has been a lot of variations of the formula and the most interesting one to consider in this paper is the holomorphic Euler characteristic of a sheaf $\mathcal{F}$ on a proper scheme $X$ which replaces the Betti numbers by the dimension of the $i$th cohomology group with coefficients in the sheaf. To be specific, 
\[
\chi(\mathcal{F})=\sum_i(-1)^ih^i(X;\mathcal{F}).
\]

Prym varieties named after Friedrich Prym are abelian varieties constructed from \'etale covers of algebraic curves. They had been investigated analytically by Schottky-Jung \cite{SJ}, Wirtinger \cite{Wi}, Farkas-Rauch \cite{FaRa}, and algebraically by Mumford \cite{Mum}. See \cite[\S1]{Farkas} for more precise details about the analytic and algebraic approaches. The study of Prym varieties has been active for decades. Inside Prym varieties, the Brill-Noether loci were constructed by Welters \cite{W85}.

The Euler characteristic of the Brill-Noether loci in Jacobians of non-singular general curves was computed in \citelist{\cite{EH}\cite{PP}} and generally with special vanishing at two marked points in \citelist{\cite{ACT21}\cite{CLPT}}. 
However, the Euler characteristics of the Brill-Noether loci for Prym varieties are less well known. Our goal of this article is to provide a formula for the Euler characteristic of the Brill-Noether loci in Prym varieties with prescribed vanishing orders at one point. 

The more precise statement of our main result is as follows. Let $\mathbb{K}$ be an algebraically closed field of characteristic not equal to $2$. Let $\pi:\widetilde{C}\rightarrow C$ be an \'etale double cover of a smooth algebraic curve $C$ of genus $g=g(C)$ over $\mathbb{K}$. We fix a point $P$ in $\widetilde{C}$. Given a sequence of integers
\[
\bold{a}=(0\leq a_0< a_1< \cdots<a_r\leq 2g-2),\]
we define the {\it pointed Brill-Noether locus of line bundles} $V_{\bold{a}}^r(P)$ in the Prym variety $\mathscr{P}^+$ for odd $r$ (or $\mathscr{P}^-$ for even $r$) as 
\begin{align*}
 V_{\bold{a}}^r(P):=\left\{L\in \mathscr{P}^\pm\;|\;h^0(\widetilde{C}, L(-a_{i}P)))\geq r+1-i\;\text{for all}\; i,\;  h^0(\widetilde{C},L)\equiv r+1\;\text{(mod 2)}\right\}.
 \end{align*}
The pointed Brill-Noether loci $V_{\bold{a}}^r(P)$ have the structure of the degeneracy loci of type D (See \S\ref{sec3}), implying its Cohen-Macaulayness under the conditions that $V_{\bold{a}}^r(P)$ is either empty or of codimension $\sum_i a_i$ in $\mathscr{P}^\pm$. To state the main theorem, we need some notations. For a sequence $\mathbf{a}$ as above, we define a partition
\[
\lambda_i=a_{r+1-i}\quad \text{for $i=1,\ldots,r+1$}.
\]
We denote by $\ell_\circ$ the number of non-zero parts of $\lambda_i$ in $\lambda$ and $|\mu|:=\sum\mu_i$ the sum of components $\mu_i$ of nonnegative integers.
Let $s_i=\ell_\circ-i-\lambda_{i}+1$ for $i=1,\ldots,\ell_\circ$ and define $h(\lambda,\bold{v},k)=|\lambda|+|\bold{v}|+k$ for $\lambda$ and any nonnegative sequence $\mathbf{v}$ and positive integer $k$ for convenience in notation. The unspecified notations in the statement below will be defined in Section \ref{sec5}. 

\begin{theorem}[Euler Characteristic]\label{thm:Euler}
Let $\pi:\widetilde{{C}}\rightarrow {C}$ be an \'{e}tale double cover of a smooth algebraic curve $C$ over $\mathbb{K}$. Let $P$ be a point in $\widetilde{C}$.  If $V^r_{\mathbf{a}}(P)$ has codimension $|\lambda|$ in $\mathscr{P}^\pm$, then the Euler characteristic $\chi\left(\mathcal{O}_{V^r_{\bold{a}}(P)}\right)$ is equal to
\begin{align*}
\sum_{\mathbf{u}, \mathbf{v}}\left(\prod_{i=1}^{\ell_\circ}(-1)^{u_i}\right.&\left.\binom{u_i+s_i}{v_i}\right)\\
\cdot&\left(\sum_{\sigma\in S_{\ell_\circ}}\sum_{k\geq0}\sum_{\bold{f}\in A_k^\sigma}\mathrm{sgn}(\sigma)\prod_{j=1}^{\ell_\circ/2}g_{f_{\sigma(2j-1),\sigma(2j)}}^{\sigma(2j-1),\sigma(2j)}\dfrac{h(\lambda,\bold{v},k)!}{2^{\ell_\circ/2-h({\lambda,\bold{v}},k)}(\ell_\circ/2)!}\right),
\end{align*}
for $\sigma\in S_{\ell_\circ+1}$ permuting $\{1,\ldots, \ell_\circ\}$ if $\ell_\circ$ is even, and
\begin{multline*}
\sum_{\mathbf{u}, \mathbf{v}}\left(\prod_{i=1}^{\ell_\circ}(-1)^{u_i}\right.\left.\binom{u_i+s_i}{v_i}\right)\\
\cdot\left(\sum_{\sigma\in S_{\ell_\circ+1}}\sum_{k\geq0}\sum_{\bold{f}\in A_k^\sigma}\mathrm{sgn}(\sigma)\prod_{j=1}^{(\ell_\circ+1)/2}g_{f_{\sigma(2j-1),\sigma(2j)}}^{\sigma(2j-1),\sigma(2j)}\dfrac{h(\lambda,\mathbf{v},k)!}{2^{(\ell_\circ+1)/2-h(\lambda,\mathbf{v},k)}((\ell_\circ+1)/2)!}\right).
\end{multline*}
for $\sigma\in S_{\ell_\circ+1}$ permuting $\{0,1,\ldots, \ell_\circ\}$ if $\ell_\circ$ is odd. Here, the sums are taken over nonnegative integer sequences $\mathbf{u}$ and $\mathbf{v}$ of length $\ell_\circ$.  
In particular, if $\lambda_i=\ell_\circ+1-i$ such that $s_i=0$, then we get the Euler characteristic $\chi(\mathcal{O}_{V^r})$ of the classical Brill-Noether loci $V^r$ in $\mathscr{P}^\pm$.
\end{theorem}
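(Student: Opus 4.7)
The strategy combines three ingredients: since $V^r_{\mathbf{a}}(P)$ has expected codimension $|\lambda|$ it is Cohen--Macaulay, so $\chi(\mathcal{O}_{V^r_{\mathbf{a}}(P)})$ equals the pushforward to a point of its K-theoretic fundamental class; the connected K-theory class formula of Section~\ref{sec5} (which extends De Concini--Pragacz to the type D setting) expresses this class as a Pfaffian in explicit Chern data determined by the sequence $\mathbf{a}$; and integration on the Prym abelian variety is carried out via Poincar\'e's formula for the theta class.

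First I would substitute the Pfaffian formula for $[\mathcal{O}_{V^r_{\mathbf{a}}(P)}]$ into the Euler characteristic and expand each rational K-theoretic factor of the form $(1-x)^{-(s_i+1)}$ using the identity $\sum_{u\geq 0}\binom{u+s_i}{s_i}x^u$. This expansion is what produces the outer summation over the non-negative sequences $\mathbf{u}$ and $\mathbf{v}$ together with the signs and binomial coefficients $(-1)^{u_i}\binom{u_i+s_i}{v_i}$ displayed in the statement; here $\mathbf{v}$ records the truncation degree retained after passing to the Chern character, so that only finitely many terms survive.

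Next I would expand the Pfaffian itself. When $\ell_\circ$ is even this is the standard Pfaffian of an antisymmetric matrix of size $\ell_\circ$, giving the alternating sum over $\sigma\in S_{\ell_\circ}$ with paired entries $g^{\sigma(2j-1),\sigma(2j)}_{f_{\sigma(2j-1),\sigma(2j)}}$. When $\ell_\circ$ is odd I would border the matrix by an auxiliary row and column indexed by $0$, a standard device for type B/D Pfaffians that enlarges the indexing set to $S_{\ell_\circ+1}$ and corresponds to the parity distinction between $\mathscr{P}^+$ and $\mathscr{P}^-$ (equivalently, the parity condition $h^0(\widetilde C,L)\equiv r+1\pmod 2$). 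The sets $A^\sigma_k$ record exactly which tuples $\mathbf{f}$ of bi-indices survive the total-degree truncation imposed by $k$.

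Finally, integration on $\mathscr{P}^\pm$ reduces to top powers of the Prym theta class $\Xi$. Poincar\'e's formula, combined with the factor of $2$ relating the Prym polarization to the restriction of the Jacobian polarization, produces the combinatorial coefficient $h(\lambda,\mathbf{v},k)!\bigl/\bigl(2^{\ell_\circ/2-h(\lambda,\mathbf{v},k)}(\ell_\circ/2)!\bigr)$ in the even case and its $(\ell_\circ+1)/2$ analogue in the odd case. The specialization $\lambda_i=\ell_\circ+1-i$ forces each $s_i=0$, collapsing the outer sum to the single term $\mathbf{u}=\mathbf{v}=0$ and recovering the classical Euler characteristic of $V^r$ in $\mathscr{P}^\pm$. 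The main obstacle is the combinatorial bookkeeping: aligning the two expansions (K-theoretic binomial and Pfaffian) so that the indices range precisely over the sets $A^\sigma_k$ defined in Section~\ref{sec5}, tracking signs consistently, and correctly handling the parity split that separates the two displayed formulas.
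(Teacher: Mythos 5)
Your skeleton is the same as the paper's: Hirzebruch--Riemann--Roch on $\mathrm{Pic}^{2g-2}(\widetilde C)$ (trivial Todd class), the Pfaffian expression for the K-class of $V^r_{\mathbf a}(P)$ obtained from Theorem \ref{thm:con} specialized at $\beta=-1$, a series expansion of the operator prefactors producing the $\mathbf u,\mathbf v$ sums, the permutation expansion of the Pfaffian (bordered when $\ell_\circ$ is odd), and finally integration via $\theta'=2\xi$ and Poincar\'e's formula. However, several of the mechanisms you describe are not the ones that actually produce the stated formula, and as written they would fail. The factor to be expanded after setting $\beta=-1$ is $(1+T_i)^{s_i}/(2+T_i)$, where $T_i$ is the raising operator on $d_\bullet(i)$; it is not of the form $(1-x)^{-(s_i+1)}$, and a single negative-binomial series $\sum_u\binom{u+s_i}{s_i}x^u$ cannot generate two independent index sequences. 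The double sum comes from the geometric series $\frac{1}{2+T_i}=\sum_{u_i\geq0}(-1)^{u_i}(1+T_i)^{u_i}$ followed by the binomial expansion $(1+T_i)^{u_i+s_i}=\sum_{v_i\geq0}\binom{u_i+s_i}{v_i}T_i^{v_i}$; in particular $v_i$ is the exponent of the raising operator (it shifts $d_{\lambda_i}(i)$ to $d_{\lambda_i+v_i}(i)$ inside the entries $m^{\#}_{ij}$), not a ``truncation degree retained after passing to the Chern character,'' and since $s_i$ may be negative one must use the extended binomial coefficients the paper fixes. You also need to justify the passage from K-theoretic to cohomological Chern data before integrating: this is exactly Lemma \ref{lem:char} combined with Lemma \ref{Chrn}, giving $\mathrm{ch}\bigl(d^K_j(i)\bigr)=(\theta')^j/j!$, a step your plan glosses over.

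Two further claims are wrong as stated. The bordering of the matrix when $\ell_\circ$ is odd is the standard augmentation \eqref{aug} needed to define the Pfaffian of an odd-size matrix; it does not encode the parity distinction between $\mathscr{P}^+$ and $\mathscr{P}^-$ (that parity is fixed by the condition $h^0(\widetilde C,L)\equiv r+1\pmod 2$ and determines which component the locus sits in, independently of the Pfaffian bookkeeping). And the specialization $\lambda_i=\ell_\circ+1-i$, i.e.\ $s_i=0$, does not collapse the outer sum to the single term $\mathbf u=\mathbf v=0$: the factor becomes $1/(2+T_i)$, which still contributes infinitely many terms $(-1)^{u_i}\binom{u_i}{v_i}T_i^{v_i}$; the ``in particular'' clause merely records that for this $\lambda$ the locus $V^r_{\mathbf a}(P)$ is the classical $V^r$, so the same formula specializes to its Euler characteristic. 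Fixing these three points (the correct operator expansion and the role of $\mathbf v$, the justification via Lemmas \ref{lem:char} and \ref{Chrn}, and the two misattributions) would bring your plan in line with the actual proof.
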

In the statement, $s_i$ can have a negative value, and we use the binomial coefficients for negative integers $-s$ defined by 
\[
\binom{-s}{t}=\dfrac{-s(-s-1)\cdots(-s-t+1)}{t!}.
\]

It is known that Chern class formulas for certain degeneracy loci of linear series can help us simplify computations in Brill-Noether theory. For instance Kempf and Laksov provided significantly simplified proofs for the existence theorem on the special divisors by studying the Porteous' formula \cite{KL}.
Moreover, the Euler characteristic of the two pointed Brill-Noether locus in Picard varieties can be obtained by applying the Chern class formula in K-theory for a Schubert variety associated to $321$-avoiding permutations in the flag bundle of Lie type A \cite{ACT21}. By definition the flag bundle $Fl(E)$ of type A on a nonsingular variety $X$ over an algebraically closed field $\mathbb{K}$ is a bundle of flags (or filtrations) of subspaces of $E$ from its vector bundle $E\rightarrow X$, so that it can be considered as $SL(n)/B$ where $B$ is a Borel subgroup of $SL(n)$ for some rank $n$. In fact, we can consider Prym varieties with special vanishings at one point as degeneracy loci having the structure of Schubert varieties of Lie type D in even orthogonal Grassmannians $OG(n,\mathbb{K}^{2n})=SO(2n)/P$ for the maximal parabolic subgroups $P$ of $SO(2n)$. 
 In this perspective we take the K-theoretic Chern class formulas for even orthogonal Grassmannian degeneracy loci of Lie type D to deduce Theorem \ref{thm:Euler}. The proof of Theorem \ref{thm:Euler} is attributed to the formula for the K-theory class of the pointed Brill-Noether loci in the Prym varieties. 

The following is our second main Theorem \ref{thm:m2} computing the connective K-theory classes for the pointed Brill-Noether loci $V_{\bold{a}}^r(P)$ in Prym varieties. The exact definition of the Pfaffian in the statement below will be introduced as \eqref{Pf} in Section \ref{sec4}.

\begin{theorem}[Theorem \ref{thm:con}, Connected ${\rm K}$-theory class]\label{thm:m2}
Assume that $V_{\bold{a}}^r(P)$ is of pure codimension $|\lambda|$. Then the class of $V_{\bold{a}}^r(P)$ is
\begin{align}\label{Form1}
[V_{\bold{a}}^r(P)]&=Pf^{\diamondsuit}_\lambda(d(1),\ldots,d(\ell_\circ);\beta)
\end{align}
in the connected $\mathrm{K}$-homology $CK_*(\mathscr{P}^\pm)$ of $\mathscr{P}^\pm$.
\end{theorem}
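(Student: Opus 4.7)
The strategy is to realize $V^r_{\bold{a}}(P)$ as a Lie type D degeneracy locus on $\mathscr{P}^\pm$ and then to pull back a universal Pfaffian formula in connective K-theory from the relevant even orthogonal Grassmannian. The degeneracy locus structure, already recalled in Section \ref{sec3}, goes back to Welters: starting from the Poincar\'e line bundle $\mathcal{L}$ on $\widetilde{C}\times \mathscr{P}^\pm$, the pushforward of the jets of $\mathcal{L}$ along the divisor $(2g-2)P$ gives a bundle $E$ of even rank on $\mathscr{P}^\pm$ that carries a non-degenerate symmetric bilinear form; the pole filtration at $P$ produces a descending flag of isotropic subbundles $F_{a_0}\supset F_{a_1}\supset\cdots\supset F_{a_r}$, and the kernel of the restriction map at $P$ gives a fixed maximal isotropic subbundle $F$. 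The pointed Brill-Noether locus $V^r_{\bold{a}}(P)$ is then cut out by the excess-intersection condition that $F\cap F_{a_i}$ has rank at least $r+1-i$ for each $i$, with the parity of $h^0$ selecting the component $\mathscr{P}^+$ or $\mathscr{P}^-$.

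Next I would invoke the Pfaffian formula in connective K-theory for Schubert-type degeneracy loci in even orthogonal Grassmannians. The formula $Pf^{\diamondsuit}_\lambda$ recalled as \eqref{Pf} in Section \ref{sec4} is the type D analog of the K-theoretic Kempf-Laksov formula and specializes, at $\beta=0$, to the classical Pfaffian formula of J\'ozefiak-Lascoux-Pragacz and De Concini-Pragacz. Applied to the universal isotropic flag on $OG(n,\mathbb{K}^{2n})$, it expresses the class of a Schubert variety labeled by the partition $\lambda$ as the diamond Pfaffian in the Chern classes of the tautological isotropic subbundle. Pulling this formula back along the classifying map $\mathscr{P}^\pm \to OG(n,\mathbb{K}^{2n})$ determined by $F$ and $F_\bullet$, and identifying the specializations of the Chern classes of the twisted bundles with the power series $d(i)$ defined in Section \ref{sec5} by a standard splitting principle computation, gives the asserted expression for $[V^r_{\bold{a}}(P)]$ in $CK_*(\mathscr{P}^\pm)$.

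For this pullback to yield the true fundamental class in $CK_*$ rather than a refined intersection class with excess-bundle corrections, one needs the classifying map to be dimensionally transverse to the universal Schubert variety. This is exactly what the hypothesis $\mathrm{codim}\, V^r_{\bold{a}}(P) = |\lambda|$ provides, together with the Cohen-Macaulayness noted in Section \ref{sec3}; under these hypotheses, a standard K-theoretic Bertini-type argument (or direct appeal to the functoriality of connective K-theoretic fundamental classes for properly intersecting degeneracy loci) identifies the pulled-back Pfaffian with $[V^r_{\bold{a}}(P)]$.

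The main obstacle I expect is bookkeeping, not geometry. Pfaffians are only defined up to sign, and the sign encodes the choice of connected component of $OG(n,\mathbb{K}^{2n})$, which in our situation is governed by the parity $h^0(\widetilde{C},L)\equiv r+1 \pmod{2}$ selecting $\mathscr{P}^+$ versus $\mathscr{P}^-$. Verifying that the diamond Pfaffian $Pf^{\diamondsuit}_\lambda$ with the conventions of Section \ref{sec4} records the correct component, and that the codimension hypothesis prevents any contribution from the opposite family of isotropic subbundles, will be the delicate step. Once this sign and the identification of $d(i)$ with the Chern data of the jet bundle are pinned down, Theorem \ref{thm:m2} follows directly from the universal connective K-theoretic Pfaffian formula.
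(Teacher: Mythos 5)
Your proposal follows essentially the same route as the paper: exhibit $V^r_{\bold{a}}(P)$ as a Lie type D degeneracy locus for the flag of isotropic subbundles $(\mathcal{W}_i)_{\mathscr{P}^\pm}$ against the maximal isotropic $\mathcal{U}_{\mathscr{P}^\pm}$ inside the residue-pairing bundle $\mathcal{V}_{\mathscr{P}^\pm}$, and then apply Anderson's connective K-theoretic Pfaffian formula (Theorem \ref{thy1}) under the codimension and Cohen--Macaulayness hypotheses. The only deviations are cosmetic: the paper builds the orthogonal bundle from an auxiliary positive divisor $D$ away from $\pi(P)$ and twists by $\widetilde{D}-a_iP$ (rather than jets along $(2g-2)P$), and it applies the degeneracy-locus formula directly over $\mathscr{P}^\pm$ rather than via a literal classifying map to $OG(n,\mathbb{K}^{2n})$.
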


The connected algebraic K-theory for schemes introduced by Cai \cite{Cai} connects the Chow groups and Quillen's K-theory groups and later it is investigated by Dai and Levine \cite{DL} in motivic homotopy theory. We adapt a simpler version of the connective K-theory of a scheme: for nonsingular $X$, the connected K-homology of $X$ denoted by $CK_*(X)$ is a graded algebra over $\mathbb{Z}[\beta]$ so that it can be specialized to the Chow homology $A_*(X)$ at $\beta=0$ and to Grothendieck group $K_\circ(X)$ of coherent sheaves at $\beta=-1$. The reader might find \citelist{\cite{A}\cite{HIMN}\cite{HIMN20}} for the study of certain degeneracy loci in the context of the connected K-theory. In this regard our strategy to have Theorem \ref{thm:m2} is to apply the K-theoretic Chern class formulas for Grassmannian degeneracy loci of Lie type D in the notion of \cite[Section 4]{A}. 

As corollaries (Corollary \ref{Form1} and Corollary \ref{Form2}) of Theorem \ref{thm:m2}, the class of the Brill-Noether loci for Prym varieties in the Grothendiek group of $\mathscr{P}^\pm$ is given by 
\[
[\mathcal{O}_{V_{\bold{a}}^r(P)}]=Pf^{\diamondsuit}_\lambda(d(1),\ldots,d(r+1);-1)\in K_\circ(\mathscr{P}^\pm)
\]
by specializing at $\beta=-1$, and their singular cohomology $H^*(\mathscr{P}^\pm,\mathbb{C})$ with complex coefficients or numerical group $N^*(\mathscr{P}^\pm,\mathbb{K})$ with coefficients in an arbitrary field $\mathbb{K}$ of characteristic different than 2 at $\beta=0$ can be expressed by 
 \[
[V_{\bold{a}}^r(P)]= \dfrac{1}{2^{\ell_\circ}}\prod_{i=1}^{\ell_\circ}\dfrac{1}{{\lambda_i!}}\prod_{i<j}\dfrac{\lambda_i-\lambda_j}{\lambda_i+\lambda_j}\cdot (2\xi)^{|\lambda|}.
 \]
Here $\xi$ is the class of the theta divisor on $\mathscr{P}^\pm$ in $N^*(\mathscr{P}^\pm,\mathbb{K})$ or $H^*(\mathscr{P}^\pm,\mathbb{C})$. 

Indeed Theorem \ref{thm:m2} extends the formulas for the cohomology classes of the Brill-Noether loci of Prym varieties by Concini and Pragacz \cite{CP95}.
When it comes to $\beta=0$, we recover the class of the pointed Brill-Noether loci for Prym varieties, Corollary \ref{Form2} that coincides with the recent work of Tarasca \cite[Theorem 1]{Tarasca}.

Lastly our formulas are presented for the one-pointed case. As such, it would be interesting to investigate further the formulas for the Euler characteristics for the two-pointed Brill-Noether loci in Prym varieties. The author currently works on the subject in this direction.

The structure of this paper is the following. We review the classical Brill-Noether loci of Prym varieties in Chapter \ref{sec2} and K-theoretic class of even orthogonal degeneracy loci in Chapter \ref{sec3}. Our connected K-theory class formulas for the pointed Brill-Noether loci of Prym varieties are presented in Chapter \ref{sec4}. In the end we present the Euler characteristic class of the Brill-Noether loci in Prym varieties with special vanishing at one point in Chapter \ref{sec5}.

\section{Review on Brill-Noether loci of Prym Variety}\label{sec2}
 Let $C$ be a smooth algebraic curve of genus $g=g(C)$ over an algebraically closed field $\mathbb{K}$ whose characteristic is not equal to $2$. 
 Let $\pi:\widetilde{C}\rightarrow C$ be an \'etale double cover of $C$. 
 We denote by $J(C)$ and $J(\widetilde{C})$ the Jacobians of $C$ and $\widetilde{C}$ respectively. 
 We define a norm map $\mathrm{Nm}_{\pi}=\pi_*:\mathrm{Div}(\widetilde{C})\rightarrow \mathrm{Div}(C)$ \cite[Appendix
B]{ACGH} by sending a divisor $\sum q_i$ on $\widetilde{C}$ to the divisor $\sum\pi(q_i)$ on $C$. This map induces a map of Jacobians
$$\mathrm{Nm}_{\pi}:J(\widetilde{C})\rightarrow J(C).$$
Let $\tau:\widetilde{C}\rightarrow \widetilde{C}$ be the involution exchanging sheets of $\widetilde{C}$ over $C$. 
 We define the {\it Prym variety} $\mathscr{P}$ \cite{H82,M71} by
 $$\mathscr{P}=\mathrm{Ker}(id_{J(\widetilde{C})}+\tau)^0=\mathrm{Im}(id_{J(\widetilde{C})}-\tau)$$
 where $id_{J(\widetilde{C})}:J(\widetilde{C})\rightarrow J(\widetilde{C})$ is the identity map on $J(\widetilde{C})$ and the superscript $0$ implies the connected component containing the origin.

Since $J(C)$ can be identified with $\mathrm{Pic}^{2g-2}(C)$, and similarly for $\widetilde{C}$, the norm map can be regarded as a map $\mathrm{Nm}:\mathrm{Pic}^{2g-2}(\widetilde{C})\rightarrow \mathrm{Pic}^{2g-2}(C)$ of Picard groups. Let $K_C\in\mathrm{Pic}^{2g-2}(C)$ be the canonical divisor class. The inverse image of $K_C$ under the norm map is given by
 $$\mathrm{Nm}^{-1}(K_C)=\mathscr{P}^+\cup \mathscr{P}^-$$ where
 $\mathscr{P}^+=\{L:h^0(\widetilde{C},L)\equiv 0\;\text{(mod 2)}\}$ and $\mathscr{P}^-=\{L:h^0(\widetilde{C},L)\equiv 1\;\text{(mod 2)}\}$. 
 
The {\it Brill-Noether loci} in the Prym varieties $\mathscr{P}^\pm$ are set-theoretically defined by the closed subset
 \begin{align}\label{Brill}
 V^r=\{L\in\mathrm{Nm}^{-1}(K_C):h^0(\widetilde{C},L)\geq r+1, h^0(\widetilde{C},L)\equiv r+1\;\text{(mod 2)}\}\subset\mathrm{Pic}^{2g-2}(\widetilde{C})
 \end{align}
 
 \begin{example}
Let $C$ be a general curve of $g=2$ and $r=1$. Then the genus of $\widetilde{C}$ is $3$ such that 
$V^1\cong W_{2}^1(\widetilde{C})$ is $\mathbb{P}^1$-bundle of $J(C)$.
\end{example}

We describe the scheme and set-theoretical structure of $V^r$ used in \cite{CP95}. It is noteworthy that Welter \cite{W85} gave a different scheme structure on $V^r$, but the two scheme structures agree on an open dense subset. Interested readers may refer to comments in Introduction and Proposition 4 of \cite{CP95} for further details.


For the double cover $\pi:\widetilde{C}\rightarrow C$, let $1\times \pi:\mathrm{Pic}^{2g-2}(\widetilde{C})\times\widetilde{C}\rightarrow\mathrm{Pic}^{2g-2}(\widetilde{C})\times C$. We denote by $p:\mathrm{Pic}^{2g-2}(\widetilde{C})\times C\rightarrow\mathrm{Pic}^{2g-2}(\widetilde{C})$ and $q:\mathrm{Pic}^{2g-2}(\widetilde{C})\times C\rightarrow C$ the first and second projection. Let $\nu$ be the projection from $\mathrm{Pic}^{2g-2}(\widetilde{C})\times\widetilde{C}$ to $\mathrm{Pic}^{2g-2}(\widetilde{C})$. Then we have the following commutative diagram:

\begin{equation*}\label{Diag}
\begin{tikzcd}[column sep=normal]
 &\widetilde{C} \ar[r,"\pi"]&C\; &\\
\mathrm{Pic}^{2g-2}(\widetilde{C})\times\widetilde{C} \ar[ur, "\mu", bend left=10]\ar[drr, "\nu", bend right=10] \ar[r,"1\times \pi"] & \mathrm{Pic}^{2g-2}(\widetilde{C})\times C \ar[dr,"p"]\ar[ur, "q"] &&\\
&\;&\mathrm{Pic}^{2g-2}(\widetilde{C}) \ar[r, "\mathrm{Nm}"']&\mathrm{Pic}^{2g-2}(C)& \\
&&&
\end{tikzcd}\\
\end{equation*}

 For distinct fixed points $P_i$ on $C$, we consider a positive divisor $D=\sum_iP_i$ of $C$ for a sufficiently large $N=\mathrm{deg}(D)$, that is, $N+2g-2\geq 2\cdot g(\widetilde{C})+1$ or $N\geq2\cdot (2g-1)+1-(2g-2)=2g+1$ to have $h^0(K_C-D)=0$ by Riemann-Roch and Clifford's theorem.

Let $\mathcal{E}=(1\times \pi)_*\mathcal{L}$ of rank $2$. We write $\mathcal{E}(\pm D)$ for $\mathcal{E}\otimes q^*(\mathcal{O}_C(\pm D))$ where $D$ is a divisor. 
Let $\mathcal{V}=p_*(\mathcal{E}(D)/\mathcal{E}(-D))$, $\mathcal{U}=p_*(\mathcal{E}(D))$, and $\mathcal{W}=p_*(\mathcal{E}/\mathcal{E}(-D))$. Then $\mathcal{W}, \mathcal{U}\subset \mathcal{V}$ are subbundles of rank $n=2N$, since $\mathcal{W}$ is locally free of rank $n$ and $\mathcal{U}$ is just shifted by the divisor $-D$ from $p_*(\mathcal{E}(D)/\mathcal{E})$ where the rank of $p_*(\mathcal{E}(D)/\mathcal{E})$ is $2(N-(2g-2-(2g-1)+1))=n$. This enables $\mathcal{V}_{\mathscr{P}^\pm}$ to have a nondegenerate quadratic form with values in $\mathcal{O}_{\mathscr{P}^\pm}$ so that $\mathcal{W}_{\mathscr{P}^\pm}, \mathcal{U}_{\mathscr{P}^\pm}$ become maximal isotropic subbundles with respect to the form.

Specifically, let us fix $L\in \mathscr{P}^{\pm}$. For $E=\pi_*L$, let $V=H^0(C,E(D)/E(-D))$ of $2n$-dimensional vector space. On $V$ we define a symmetric form $Q:V\times V\rightarrow \mathbb{C}$ by
\begin{align*}
Q(\sigma,\tau)=\sum_i\mathrm{Res}_{P_i}(\sigma\tau)
\end{align*}
where $\sigma\tau\in H^0(C, L^2(2D)/L^2)=H^0(C, \omega_C(2D)/\omega_C)$ such that $Q$ is nondegenerate. 
In fact we can define $V$ as $V=U'\oplus W$ where $U'=H^0(C,E(D)/E)$ and $W=H^0(C,E/E(-D))$. The nondegenerate symmetric form $Q$ on $V$ is defined by
$$Q(\sigma_1\oplus \sigma_2,\tau_1\oplus \tau_2)=\sum\mathrm{Res}(\sigma_1\tau_2+\sigma_2\tau_1).$$

We can consider the symmetric form $Q$ as a quadratic form $\mathfrak{q}$ which sends $v$ to $Q(v,v)$ for $v\in V$. With the quadratic form on $V$ we see that $U'$ and $W$ are $n$-dimensional isotropic subspaces by the residue theorem. In other words, $U'$ consists of regular functions, which makes $U'$ an isotropic subspace, and for $W$, if $\sigma$ and $\tau$ are in $W$, then the sum of the residue of $\sigma\tau$ is zero, and so $W$ is also an isotropic subspace. Additionally we have another isotropic subspace $U=H^0(C,E(D))$ for the quadratic form by the restriction map of the space $H^0(C,E(D))$ in $V$. We notice that the intersection of $U$ and $W$ is global regular sections of $E$ and so $\mathrm{dim}(U\cap W)=h^0(C,E)$. 
Due to the choice of $\mathcal{L}$, the construction globalizes and thus defines set-theoretically the Brill-Noether loci (\ref{Brill}) on a Prym variety. Readers may refer to \cite{M71} for more details.

\section{The connected K-theory class of even orthogonal degeneracy loci}\label{sec3}

This section reviews on general formulas for even orthogonal degeneracy loci in the connective K-homology used later to find the class of the pointed Brill-Noether locus in a Prym variety in Section \ref{sec4}. To be precise, we use the connective K-homology with the natural isomorphisms between the Chow homology and the Grothendiek group of coherent sheaves 
$$CK_*(X)/(\beta=0)\cong A_*(X)\quad\text{and}\quad CK_*(X)/(\beta=-1)\cong K_\circ(X)$$ 
of nonsingular $X$ by specializing the parameter as $\beta=0$ and $\beta=-1$ respectively. Even if we choose $X$ to be singular, the isomorphism still works with their cohomologies via the operational cohomology theory. Moreover, for the closed subvariety $Y\subseteq X$, the class $\left[Y\right]\in A_*(X)$ and $\left[\mathcal{O}_Y\right]\in K_\circ(X)$ can be obtained from the fundamental class $\left[Y\right]\in CK_*(X)$.

 Let $\mathscr{V}\rightarrow X$ be a rank $2n$ vector bundle over a smooth irreducible algebraic variety $X$ over $\mathbb{K}$, equipped with a nondegenerate quadric form $\mathfrak{q}$. Let $\bold{OG}(n, \mathscr{V})$ be an orthogonal Grassmannian bundle with $\pi:\bold{OG}(n,\mathscr{V})\rightarrow X$.  We consider a rank $n$ tautological subbundle $\mathscr{S}$ of $\pi^*\mathscr{V}$ on $\bold{OG}(n,\mathscr{V})$. With a common abuse of notation, we may use $\mathscr{V}$ as $\pi^*\mathscr{V}$. Let $\mathscr{F}$ be a rank $n$ maximal isotropic subbundle of $\mathscr{V}$. Especially $\mathscr{S}$ is also a maximal isotropic subbundle of $\mathscr{V}$. Let us fix a flag of isotropic subbundles 
 $$\mathscr{F}_{p_r}\hookrightarrow \mathscr{F}_{p_{r-1}}\hookrightarrow\cdots\hookrightarrow \mathscr{F}_{p_{0}}\subseteq\mathscr{F}\xrightarrow{\phi}\mathscr{S}$$
with respect to the form on a variety $X$ where $\mathscr{F}_{p_i}$ has rank $n-p_i$ for all $i$. Especially $0\leq p_0<\cdots<p_{r}$. We define the degeneracy locus $V_{\mathbf{p}}^r$ associated to a sequence $\mathbf{p}=(0\leq p_0<\cdots<p_{r})$ to be 
 $$V_{\mathbf{p}}^{r}=\{x\in X|\;\mathrm{dim}(\mathscr{F}_{p_i}\cap \mathscr{S})_x\geq r+1-i,\quad \mathrm{dim}(\mathscr{F}\cap \mathscr{S})_x\equiv r+1 \;\text{(mod}\;2) \;\;\text{for }x\in X\}$$
for $0\leq i\leq r$. We remark that this degeneracy locus should be taken to be the closure of the locus where equality holds. It is known that $V_{\mathbf{p}}^r$ is a Cohen-Macaulay scheme if $V_{\mathbf{p}}^r$ is either empty or of codimension  $\sum_i p_i$ due to the main theorem in \cite{DCL} and the same reasoning as in the proof of \cite[Proposition 2 (1)]{CP95}.

Now, we slightly modify the Pfaffian formula in \cite[Theorem 4]{A19} to our setting. We define Euler classes $e(\mathscr{F}_{p_i},\mathscr{S})$ for isotropic subbundles $\mathscr{F}_{p_i}$ and $\mathscr{S}$. In other words, for maximal isotropic bundles $\mathscr{S}$ and $\mathscr{F}_{p_i}$, Euler classes are defined by
\begin{equation*}
e_m(\mathscr{F}_{p_i},\mathscr{S}) =
\begin{cases}
(-1)^{\mathrm{dim}(\mathscr{F}\cap \mathscr{S})}\gamma(\mathscr{F},\mathscr{S})c_{p_i}(\mathscr{F}/\mathscr{F}_{p_i}) & \text{if $m=p_i$}\\
0& \text{otherwise}
\end{cases}
\end{equation*}
where $c(\mathscr{F}/\mathscr{F}_{p_i})$ indicates the Chern class and $\gamma(\mathscr{S},\mathscr{F})$ in $CK^0(X)$ is the canonical square root of $c(\mathscr{V}-\mathscr{S}-\mathscr{F};\beta)$ \cite[Appendix B]{A}. 
We denote by $T_i$ the raising operator increasing the index of $c(i):=c(\mathscr{V}-\mathscr{S}-\mathscr{F}_{p_{r+1-i}})$ by one. Let $R_{ij}=T_i/T_j$ and $e(i):=e(\mathscr{F}_{p_{r+1-i}},\mathscr{S})$. Let $\lambda=(\lambda_{r+1}>\cdots>\lambda_0)$ be a strict partition defined by 
\[
\lambda_i=p_{r+1-i}\quad\text{for $i=0,\ldots,r+1$ }
\]
and denote by $\ell_\circ:=\ell(\lambda)$ the number of non-zero parts $\lambda_i$ of $\lambda$. Suppose that $\ell_\circ$ is even. 
We define the Pfaffian formula
\begin{equation}\label{eqn:pfa}
\mathrm{Pf}_\lambda(d(1),\ldots,d(\ell_\circ);\beta):=\mathrm{Pf}(M;\beta),
\end{equation}
where $d(i)=c(i)+(-1)^ie(i)$ for $1\leq i\leq \ell_\circ$, and the entry $m_{i,j}$ of the $\ell_\circ\times\ell_\circ$ skew-symmetric matrix $M$ is
\begin{align*}
m_{i,j}&=\dfrac{1-\delta_i\delta_jR_{ij}}{1+\delta_i\delta_j(R_{ij}-\beta T_i)}\cdot\dfrac{(1-\beta \widetilde{T}_i)^{{\ell_\circ}-i-\lambda_i+1}}{2-\beta\widetilde{T}_i}\cdot\dfrac{(1-\beta\widetilde{T}_j)^{{\ell_\circ}-j-\lambda_j+1}}{2-\beta\widetilde{T}_j}\\
&\cdot (c_{\lambda_i}(i)-(-1)^{{\ell_\circ}}e_{\lambda_i}(i))(c_{\lambda_j}(j)+(-1)^{{\ell_\circ}}e_{\lambda_j}(j)),
\end{align*}
with the skew-symmetric relations $m_{ji} =-m_{ij}$ and $m_{ii} = 0$. Here $\widetilde{T}_i=\delta_iT_i$ and $\delta_i$ assigns $(-1)^i$ to $0$ in $d(i)$. In particular, if $\ell_\circ$ is odd, we augment the matrix $M$ by the setting $m_{0j}=(1-\beta\widetilde{T}_j)^{{\ell_\circ}-j-\lambda_j+1}(2-\beta\widetilde{T}_j)^{-1}\cdot(c_{\lambda_j}(j)+e_{\lambda_j}(j))$ for $j=1,\ldots,\ell_\circ$.

Combined \cite[Theorem 4]{A19} with \cite[, Corrigendum Pg. 3]{A19} and then specialized to our setting, we have the formula for the class $\left[V_{\bold{p}}^r\right]$ as follows. 
\begin{theorem}\label{thy1}
Let $X$ be a variety. Then the connected K-theory class of $V^r_{\bold{p}}$ in $CK_*(X)\left[\frac{1}{2}\right]$ is given by
\begin{align*}\label{thy1}
\left[V^r_{\bold{p}}\right]&=\mathrm{Pf}_\lambda(d(1),\ldots,d(\ell_\circ);\beta)\cdot [X].
\end{align*}
\end{theorem}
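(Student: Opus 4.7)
The plan is to deduce Theorem \ref{thy1} directly from \cite[Theorem 4]{A19} (together with its Corrigendum) by matching our setup to Anderson's framework for Pfaffian formulas for even orthogonal Grassmannian degeneracy loci in connective K-theory. First I would verify that the geometric data we set up---a rank $2n$ bundle $\mathscr{V}\to X$ with a nondegenerate quadratic form $\mathfrak{q}$, the tautological subbundle $\mathscr{S}\subseteq \pi^*\mathscr{V}$ on $\bold{OG}(n,\mathscr{V})$, and the flag of isotropic subbundles $\mathscr{F}_{p_r}\hookrightarrow \cdots\hookrightarrow \mathscr{F}_{p_0}\subseteq\mathscr{F}$---is precisely the input for Anderson's theorem, with $V^r_{\bold{p}}$ coinciding with his degeneracy locus once the parity condition $\dim(\mathscr{F}\cap\mathscr{S})\equiv r+1\pmod{2}$ is added to select a single connected component of the orthogonal Grassmannian.

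Next I would translate the strict partition $\lambda=(\lambda_{r+1}>\cdots>\lambda_0)$ with $\lambda_i=p_{r+1-i}$ into Anderson's indexing convention and carefully rewrite the entries $m_{ij}$ of his skew-symmetric matrix in our notation. The ingredients $c(i)=c(\mathscr{V}-\mathscr{S}-\mathscr{F}_{p_{r+1-i}})$ and $e(i)=e(\mathscr{F}_{p_{r+1-i}},\mathscr{S})$ are obtained from the total Chern class of the virtual bundle and the type D Euler class respectively, with the canonical square root $\gamma(\mathscr{F},\mathscr{S})$ of $c(\mathscr{V}-\mathscr{S}-\mathscr{F};\beta)$ supplying the sign-correction factor described in \cite[Appendix B]{A}. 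After setting $d(i)=c(i)+(-1)^i e(i)$ and applying the raising operators $T_i$ and $\widetilde{T}_i=\delta_iT_i$, the factor $(2-\beta\widetilde{T}_j)^{-1}$ in each entry explains why one must invert $2$, so that the formula is a well-defined element of $CK_*(X)[\tfrac{1}{2}]$.

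For the parity, when $\ell_\circ$ is even Anderson's Pfaffian directly produces the class; when $\ell_\circ$ is odd I would augment the matrix by the extra row $m_{0j}=(1-\beta\widetilde{T}_j)^{\ell_\circ-j-\lambda_j+1}(2-\beta\widetilde{T}_j)^{-1}(c_{\lambda_j}(j)+e_{\lambda_j}(j))$ given in the definition of (\ref{eqn:pfa}), which matches Anderson's odd-rank augmentation. Evaluating the resulting raising-operator expression on the fundamental class $[X]$ then produces the identity $[V^r_{\bold{p}}]=\mathrm{Pf}_\lambda(d(1),\ldots,d(\ell_\circ);\beta)\cdot [X]$ in $CK_*(X)[\tfrac{1}{2}]$.

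The main technical obstacle will be the bookkeeping of signs imported from the Corrigendum: specifically, the placement of $(-1)^i$ in the definition of $d(i)$ and the $(-1)^{\ell_\circ}$ factors multiplying $e_{\lambda_i}(i)$ and $e_{\lambda_j}(j)$ inside $m_{ij}$ must be verified against Anderson's corrected expression, since these signs are sensitive both to the parity of $\ell_\circ$ and to the chosen connected component of the orthogonal Grassmannian. Once these signs are matched consistently, the substitution $p_{r+1-i}=\lambda_i$ converts \cite[Theorem 4]{A19} into Theorem \ref{thy1} with no further computation required.
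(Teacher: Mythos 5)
Your proposal is correct and follows essentially the same route as the paper: the paper itself gives no independent argument for Theorem \ref{thy1}, but obtains it exactly as you describe, by combining Anderson's Theorem 4 with the sign corrections in its Corrigendum and specializing the Pfaffian entries (the classes $c(i)$, $e(i)$, $d(i)=c(i)+(-1)^ie(i)$, the raising operators, the factor $(2-\beta\widetilde T_j)^{-1}$ forcing the localization at $\tfrac12$, and the odd-$\ell_\circ$ augmentation) to the isotropic flag $\mathscr{F}_{p_r}\hookrightarrow\cdots\hookrightarrow\mathscr{F}_{p_0}\subseteq\mathscr{F}$ and the tautological bundle $\mathscr{S}$. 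The translation steps and sign bookkeeping you flag are precisely the content of the paper's "slight modification" of Anderson's formula, so no further idea is needed.
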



\section{Classes of the pointed Brill-Noether loci on Prym variety}\label{sec4}

 \subsection{Brill-Noether Classes with a vanishing sequence}
 In this section we consider the class of the Brill-Noether loci in the Prym variety $\mathscr{P}^\pm$ with prescribed vanishing orders at one point. 
 
 Let $C$ be a smooth curve of genus $g$ and $\pi:\widetilde{C}\rightarrow C$ be an \'etale double cover of $C$. For a line bundle $L$ in $ V^r$, 
the vanishing sequence at $P\in \widetilde{C}$ is given by the sequence
\begin{equation*}
\bold{a}(P)=(0\leq a_0^L(P)<\cdots<a_r^L(P)\leq 2g-2)
\end{equation*}
of vanishing orders in the $2n$-dimensional vector space $\mathcal{V}_{\mathscr{P}^\pm}=p_*(\mathcal{E}(D)/\mathcal{E}(-D))_{\mathscr{P}^\pm}$ at $P$ such that it is the maximal sequence satisfying the condition 
\[
h^0(\widetilde{C}, L(-a_{i}^L(P)\cdot P)))\geq r+1-i\;\text{for all}\; i. 
\]
The reader may refer to the work of Eisenbud-Harris used in \cite{W85} for the vanishing sequence.
Let us fix a point $P$ in $\widetilde{C}$ and the sequence 
\[
\bold{a}=(0\leq a_0< a_1< \cdots<a_r\leq 2g-2).\]
 We define the {\it pointed Brill-Noether loci of line bundles} $V_{\bold{a}}^r(P)$ in the Prym variety $\mathscr{P}^+$ for odd $r$ (or $\mathscr{P}^-$ for even $r$) by  \begin{align*}
 V_{\bold{a}}^r(P):=\left\{L\in \mathrm{Nm}^{-1}(K_C)\;|\;\right.&h^0(\widetilde{C}, L(-a_{i}P)))\geq r+1-i\;\text{for all}\; i \\
&\left. h^0(\widetilde{C},L)\equiv r+1\;\text{(mod 2)}\right\}\subset \mathrm{Pic}^{2g-2}(\widetilde{C}).
 \end{align*}
  The structure of this variety can be considered by an even orthogonal Grassmannian degeneracy locus $V_{\bold{a}}^r(P)$ in $\mathrm{Pic}^{2g-2}(\widetilde{C})$ as in \S\ref{sec3}. In particular this construction generalizes the result for the Brill-Noether loci without vanishing orders in the Prym variety introduced in \S\ref{sec2}.

We recall that $\mathcal{L}$ is the Poincar\'e line bundle on $\mathrm{Pic}^{2g-2}(\widetilde{C})\times \widetilde{C}$, $\mathcal{E}=(1\times \pi)_*\mathcal{L}$ and $D=\sum_{i} P_i$ is a divisor for distinct fixed points $P_i$ on $C$. Assume that $\pi(P)\neq P_i$ for all $i$. Let $\widetilde{D}=\pi^*(D)$. We note that $p_*(\mathcal{E}\otimes q^*(\mathcal{O}_C(D)))=\nu_*(\mathcal{L}\otimes\mu^*\mathcal{O}_{\widetilde{C}}(\widetilde{D}))=\nu_*(\mathcal{L}(\widetilde{D}))$. 

Now, we set
\begin{align*}
\mathcal{W}_i:&=\nu_*(\mathcal{L}\otimes \mu^*(\mathcal{O}_{\widetilde{C}}(\widetilde{D}-a_{i}P)))=\nu_*(\mathcal{L}(\widetilde{D}-a_{i}P))
\end{align*}
for $0\leq i\leq r$.
Then the sheaf $\mathcal{W}_i$ is the vector bundle of rank
\[
\mathrm{rk}(\mathcal{W}_i)=n-a_{i}
\] 
so that we have a filtration $\mathcal{W}_{r}\subset \mathcal{W}_{r-1}\subset\cdots\subset \mathcal{W}_0\subseteq\mathcal{W}:=p_*(\mathcal{E}(D))=\nu_*(\mathcal{L}(\widetilde{D}))$. Then there is a natural sequence
 \[
 (\mathcal{W}_{r})_{\mathscr{P}^\pm}\hookrightarrow (\mathcal{W}_{r-1})_{\mathscr{P}^\pm}\hookrightarrow \cdots\hookrightarrow (\mathcal{W}_0)_{\mathscr{P}^\pm}\subseteq\mathcal{W}_{\mathscr{P}^\pm}
  \]
 of vector bundles on $\mathscr{P}^\pm$. Here the nondegenerate symmetric form $Q$ defined in \S\ref{sec2} is naturally inherited to their subbunddles. In addition, for $L\in \mathscr{P}^{\pm}$, if we set $U=H^0(\widetilde{C},L/L(-\widetilde{D}))$ and $W_i=H^0(\widetilde{C},L(\widetilde{D}-a_i(P)\cdot P))$, $U\cap W_i$ is global regular sections of $L$ such that 
 $$\mathrm{dim}(U\cap W_i)=h^0(\widetilde{C},L(-a_iP)).$$
Hence $V_{\bold{a}}^r(P)$ can be regarded as the degeneracy loci in \S\ref{sec3}, so that it is a Cohen-Macaulay scheme provided that $V_{\bold{a}}^r(P)$ is either empty or of codimension  $\sum_i a_i$.

Let $\lambda$ be the partition associated to the vanishing orders $\mathbf{a}$:
\[
\lambda_i:=a_{r+1-i}\quad\text{for}\;i=1,\ldots, r+1.
\] 
We recall that $\ell_\circ:=\ell(\lambda)$ is the number of non-zero parts of $\lambda_i$ of $\lambda$. 

Since $\mathcal{U}_{\mathscr{P}^\pm}$ has the trivial Chern class, the Chern classes $c(i)=c(\mathcal{V}_{\mathscr{P}^\pm}-\mathcal{U}_{\mathscr{P}^\pm}-(\mathcal{W}_{r+1-i})_{\mathscr{P}^\pm})$ become $c(\mathcal{V}_{\mathscr{P}^\pm}-(\mathcal{W}_{r+1-i})_{\mathscr{P}^\pm})=c((\mathcal{W}_{r+1-i})_{\mathscr{P}^\pm}^\vee)$, and the Euler classes $e_j(i)=e_j((\mathcal{W}_{r+1-i})_{\mathscr{P}^\pm},\mathcal{U}_{\mathscr{P}^\pm})$ are equal to 
\[
e_j(i)=(-1)^{\mathrm{dim}(\mathcal{U}\cap\mathcal{W})}\gamma(\mathcal{W}_{\mathscr{P}^\pm},\mathcal{U}_{\mathscr{P}^\pm}) c_j(\mathcal{W}_{\mathscr{P}^\pm}/(\mathcal{W}_{r+1-i})_{\mathscr{P}^\pm})
\] if $j=\lambda_i$ and $0$ otherwise. Here $\gamma(\mathcal{W}_{\mathscr{P}^\pm},\mathcal{U}_{\mathscr{P}^\pm})$ is the canonical square root of the CK-theoretic Chern class $c((\mathcal{W}_{i})_{\mathscr{P}^\pm}^\vee;\beta)$. Therefore, with these specializations for $d(i)=c(i)+(-1)^ie(i)$ for $1\leq i\leq \ell_\circ$, and the Pfaffian formula  \eqref{eqn:pfa} yelds 
\begin{align}\label{Pf}
Pf^{\diamondsuit}_\lambda(d(1),\ldots,d(\ell_\circ);\beta)&=Pf(M^{\diamondsuit};\beta)
\end{align}
of the $\ell_\circ\times\ell_\circ$ skew-symmetric matrix $M^{\diamondsuit}$ whose entries $m^{\diamondsuit}_{i,j}$ are given by 
\begin{align*}
m^{\diamondsuit}_{i,j}&=\dfrac{1-\delta_i\delta_jR_{ij}}{1+\delta_i\delta_j(R_{ij}-\beta T_i)}\cdot\dfrac{(1-\beta \widetilde{T}_i)^{{\ell_\circ}-i-\lambda_i+1}}{2-\beta\widetilde{T}_i}\cdot\dfrac{(1-\beta\widetilde{T}_j)^{{\ell_\circ}-j-\lambda_j+1}}{2-\beta\widetilde{T}_j}\\
&\cdot (c_{\lambda_i}((\mathcal{W}_{r+1-i})_{\mathscr{P}^\pm}^\vee)-(-1)^{{\ell_\circ}+\mathrm{dim}(\mathcal{U}\cap\mathcal{W})}\gamma(\mathcal{W}_{\mathscr{P}^\pm},\mathcal{U}_{\mathscr{P}^\pm}) c_{\lambda_i}(\mathcal{W}_{\mathscr{P}^\pm}/(\mathcal{W}_{r+1-i})_{\mathscr{P}^\pm})\\
&\cdot(c_{\lambda_j}((\mathcal{W}_{r+1-j})_{\mathscr{P}^\pm}^\vee)+(-1)^{{\ell_\circ}+\mathrm{dim}(\mathcal{U}\cap\mathcal{W})}\gamma(\mathcal{W}_{\mathscr{P}^\pm},\mathcal{U}_{\mathscr{P}^\pm}) c_{\lambda_j}(\mathcal{W}_{\mathscr{P}^\pm}/(\mathcal{W}_{r+1-j})_{\mathscr{P}^\pm}).
\end{align*}
In case that $\ell_\circ$ is odd, we use the augmented matrix $M^{\diamondsuit}$ by putting 
\begin{multline}
\label{aug}
m^{\diamondsuit}_{0j}=(1-\beta\widetilde{T}_j)^{{\ell_\circ}-j-\lambda_j+1}(2-\beta\widetilde{T}_j)^{-1}\\
\cdot(c_{\lambda_j}((\mathcal{W}_{r+1-j})_{\mathscr{P}^\pm}^\vee)+(-1)^{\mathrm{dim}(\mathcal{U}\cap\mathcal{W})}\gamma(\mathcal{W}_{\mathscr{P}^\pm},\mathcal{U}_{\mathscr{P}^\pm}) c_{\lambda_j}(\mathcal{W}_{\mathscr{P}^\pm}/(\mathcal{W}_{r+1-j})_{\mathscr{P}^\pm})
\end{multline}
for $j=1,\ldots,\ell_\circ$. Here $Pf^{\diamondsuit}_\lambda(d(1),\ldots,d(\ell_\circ);\beta)$ and $M^{\diamondsuit}$ are the specialized ones with our specific bundles $\mathcal{W}_\bullet$ and $\mathcal{U}$ from $Pf_\lambda(d(1),\ldots,d(\ell_\circ);\beta)$ and $M$ used in \eqref{eqn:pfa}. Then by Theorem \ref{thy1}, we have the connected K-theory class of $V_{\bold{a}}^r(P)$ as follows.

\begin{theorem}\label{thm:con}
Assume that $V_{\bold{a}}^r(P)$ has pure codimension $|\lambda|:=\sum_i\lambda_i$. Then we have the class of $V_{\bold{a}}^r(P)$ given by 
\[
[V_{\bold{a}}^r(P)]=Pf^{\diamondsuit}_\lambda(d(1),\ldots,d(\ell_\circ);\beta)\in CK_*(\mathscr{P}^\pm).
\]
\end{theorem}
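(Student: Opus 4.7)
The strategy is a direct application of Theorem \ref{thy1} after realizing $V_{\bold{a}}^r(P)$ as a specific instance of the even orthogonal Grassmannian degeneracy locus $V_{\mathbf{p}}^r$ studied in Section \ref{sec3}. I would take the ambient variety to be $X = \mathscr{P}^\pm$, the rank $2n$ orthogonal bundle to be $\mathcal{V}_{\mathscr{P}^\pm}$ equipped with the globalized residue quadratic form $Q$ from Section \ref{sec2}, the tautological subbundle $\mathscr{S}$ to be $\mathcal{U}_{\mathscr{P}^\pm}$, and the ambient maximal isotropic bundle $\mathscr{F}$ to be $\mathcal{W}_{\mathscr{P}^\pm}$. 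The filtration $(\mathcal{W}_r)_{\mathscr{P}^\pm} \hookrightarrow \cdots \hookrightarrow (\mathcal{W}_0)_{\mathscr{P}^\pm} \subseteq \mathcal{W}_{\mathscr{P}^\pm}$, whose terms have ranks $n - a_i$, furnishes the required isotropic flag, so the index correspondence $p_i = a_i$ and $\lambda_i = a_{r+1-i}$ from Section \ref{sec3} applies verbatim.

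Next, I would match the defining conditions. Fiberwise over $L$, one has $\dim(U \cap W_i) = h^0(\widetilde{C}, L(-a_i P))$, so the vanishing inequalities $h^0(\widetilde{C}, L(-a_i P)) \geq r+1-i$ translate into the rank conditions $\dim((\mathscr{F}_{p_i})_x \cap \mathscr{S}_x) \geq r+1-i$ that cut out $V_{\mathbf{p}}^r$, while the parity condition $h^0(\widetilde{C}, L) \equiv r+1 \pmod{2}$ is already encoded in the choice of component $\mathscr{P}^+$ or $\mathscr{P}^-$. Under the pure codimension assumption $|\lambda|$, this identifies $V_{\bold{a}}^r(P)$ with $V_{\mathbf{p}}^r$ scheme-theoretically, so Theorem \ref{thy1} immediately gives $[V_{\bold{a}}^r(P)] = \mathrm{Pf}_\lambda(d(1), \ldots, d(\ell_\circ); \beta) \cdot [\mathscr{P}^\pm]$ in $CK_*(\mathscr{P}^\pm)\bigl[\tfrac{1}{2}\bigr]$.

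The final step is to specialize the generic Pfaffian to the geometry at hand. Since $\mathcal{U}_{\mathscr{P}^\pm}$ is, up to the twist by the pulled-back divisor $\widetilde{D}$, the direct image of a bundle of sections on a fixed divisor, its Chern class is trivial; hence $c(i) = c(\mathcal{V}_{\mathscr{P}^\pm} - \mathcal{U}_{\mathscr{P}^\pm} - (\mathcal{W}_{r+1-i})_{\mathscr{P}^\pm})$ collapses to $c((\mathcal{W}_{r+1-i})_{\mathscr{P}^\pm}^{\vee})$, and the Euler classes $e_j(i)$ take the specific form recorded immediately before the theorem statement. Substituting these into the matrix $M$ of \eqref{eqn:pfa} produces exactly the matrix $M^{\diamondsuit}$ of \eqref{Pf} (with the augmentation \eqref{aug} when $\ell_\circ$ is odd), and the claimed formula follows. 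The main obstacle is to verify carefully that the pointwise residue form $Q$ globalizes to a nondegenerate quadratic form on $\mathcal{V}_{\mathscr{P}^\pm}$ with both $\mathcal{U}_{\mathscr{P}^\pm}$ and $\mathcal{W}_{\mathscr{P}^\pm}$ remaining maximal isotropic uniformly in $L$, and that $c(\mathcal{U}_{\mathscr{P}^\pm})$ is globally trivial; once these structural points are in place, the result is a clean specialization of Theorem \ref{thy1}.
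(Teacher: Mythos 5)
Your proposal is correct and follows essentially the same route as the paper: the text preceding Theorem \ref{thm:con} realizes $V_{\bold{a}}^r(P)$ as the type D degeneracy locus of \S\ref{sec3} with $X=\mathscr{P}^\pm$, $\mathscr{S}=\mathcal{U}_{\mathscr{P}^\pm}$, $\mathscr{F}=\mathcal{W}_{\mathscr{P}^\pm}$ and the flag $(\mathcal{W}_i)_{\mathscr{P}^\pm}$ of ranks $n-a_i$, uses $\dim(U\cap W_i)=h^0(\widetilde{C},L(-a_iP))$, and then applies Theorem \ref{thy1} with the triviality of $c(\mathcal{U}_{\mathscr{P}^\pm})$ (cf.\ \cite[Lemma 5]{CP95}) to turn $M$ into $M^{\diamondsuit}$, exactly as you describe. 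The structural points you flag (globalization of the residue form $Q$ and the isotropy of $\mathcal{U}_{\mathscr{P}^\pm}$, $\mathcal{W}_{\mathscr{P}^\pm}$) are precisely the content of \S\ref{sec2}, so no new ingredient is missing.
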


 Specializing at $\beta=-1$, we have the K-theory class of the pointed Brill-Noether locus $V_{\bold{a}}^r(P)$ in the Grothendieck group of coherent sheaves $K_\circ(\mathscr{P}^\pm)$ for $\mathscr{P}^\pm$ as below.
\begin{corollary}[$\beta=-1$]\label{Form1}
Assume that $V_{\bold{a}}^r(P)$ has pure codimension $|\lambda|$. Then the K-theory class of $V_{\bold{a}}^r(P)$ is
\[
[\mathcal{O}_{V_{\bold{a}}^r(P)}]=Pf^{\diamondsuit}_\lambda(d(1),\ldots,d(r+1);-1)\in K_\circ(\mathscr{P}^\pm).
\]
\end{corollary}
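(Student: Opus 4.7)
The plan is to derive the corollary as a direct specialization of Theorem \ref{thm:con} via the natural isomorphism $CK_*(X)/(\beta=-1)\cong K_\circ(X)$ recorded at the start of Section \ref{sec3}. First I would note that $\mathscr{P}^\pm$ is smooth and that, by the realization of $V_{\bold{a}}^r(P)$ as an even orthogonal Grassmannian degeneracy locus in Section \ref{sec4}, the codimension hypothesis $|\lambda|$ together with the main theorem of \cite{DCL} implies Cohen-Macaulayness of $V_{\bold{a}}^r(P)$. Consequently the specialization map carries the fundamental class $[V_{\bold{a}}^r(P)]\in CK_*(\mathscr{P}^\pm)$ to the structure-sheaf class $[\mathcal{O}_{V_{\bold{a}}^r(P)}]\in K_\circ(\mathscr{P}^\pm)$, so the left-hand side of the claimed identity is precisely the image of Theorem \ref{thm:con}'s left-hand side under the specialization.

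Next I would check that the right-hand side specializes without difficulty. Each entry $m^{\diamondsuit}_{i,j}$ of the matrix $M^{\diamondsuit}$ appearing in \eqref{Pf} is a rational expression in the raising operators $T_i$ and in $\beta$; at $\beta=-1$ the potentially worrisome denominator $2-\beta\widetilde{T}_j$ becomes $2+\widetilde{T}_j$, and the factor $1+\delta_i\delta_j(R_{ij}-\beta T_i)$ likewise remains invertible, both via geometric series expansion in the nilpotent raising operators acting on the finite-dimensional cohomology of the Prym variety. Hence $Pf^{\diamondsuit}_\lambda(d(1),\ldots,d(\ell_\circ);-1)$ is a well-defined polynomial in the Chern and Euler classes of the bundles $(\mathcal{W}_\bullet)_{\mathscr{P}^\pm}$ and $\mathcal{U}_{\mathscr{P}^\pm}$, obtained from the $\beta$-formula by direct substitution.

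The corollary then follows by applying the specialization $\beta=-1$ to both sides of the identity in Theorem \ref{thm:con} and combining the two observations above. The only real obstacle is to confirm that the specialization of the Pfaffian expression commutes with the specialization of fundamental classes; this is built into the functoriality of connective K-theory in $\beta$ together with the Cohen-Macaulayness of $V_{\bold{a}}^r(P)$, so once Theorem \ref{thm:con} is in hand the deduction is essentially formal.
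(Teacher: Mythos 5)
Your proposal is correct and follows essentially the same route as the paper, which obtains the corollary simply by specializing Theorem \ref{thm:con} at $\beta=-1$ and using the identifications $CK_*(X)/(\beta=-1)\cong K_\circ(X)$ and $[V_{\bold{a}}^r(P)]\mapsto[\mathcal{O}_{V_{\bold{a}}^r(P)}]$ recorded at the start of Section \ref{sec3}; your additional remarks on Cohen-Macaulayness and the well-definedness of the Pfaffian at $\beta=-1$ only make explicit what the paper leaves implicit.
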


Similarly, with a specialization at $\beta=0$, we obtain the class of $V_{\bold{a}}^r(P)$ in Chow homology $A_*(\mathscr{P}^\pm)$ of $\mathscr{P}^\pm$ as follows.
\begin{corollary}[$\beta=0$]\label{cor2}
Assume that $V_{\bold{a}}^r(P)$ has pure codimension $|\lambda|$. Then the class of $V_{\bold{a}}^r(P)$ is
\[
[{V_{\bold{a}}^r(P)}]=Pf^{\diamondsuit}_\lambda(d(1),\ldots,d(r+1);0)\in A_*(\mathscr{P}^\pm).
\]
\end{corollary}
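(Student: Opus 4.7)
The plan is to deduce this corollary directly from Theorem \ref{thm:con} by specializing the parameter $\beta$ to $0$ and invoking the compatibility between connected K-homology and Chow homology recalled in Section \ref{sec3}. Concretely, the fundamental class $[V_{\bold{a}}^r(P)]\in CK_*(\mathscr{P}^\pm)$ established in Theorem \ref{thm:con} maps to $[V_{\bold{a}}^r(P)]\in A_*(\mathscr{P}^\pm)$ under the natural surjection $CK_*(\mathscr{P}^\pm)\twoheadrightarrow CK_*(\mathscr{P}^\pm)/(\beta=0)\cong A_*(\mathscr{P}^\pm)$, so it suffices to show that the right-hand side of the formula specializes correctly.

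First I would verify that the hypothesis is still what is needed: the pure codimension assumption on $V_{\bold{a}}^r(P)$, together with its Cohen-Macaulayness established in Section \ref{sec4}, ensures that the connective K-theoretic fundamental class indeed specializes to the Chow-theoretic fundamental class (rather than to a linear combination of classes of smaller-dimensional components). This is the content of the identification $CK_*(X)/(\beta=0)\cong A_*(X)$ sending $[Y]\mapsto[Y]$ for a closed subvariety $Y$, already invoked implicitly at the start of Section \ref{sec3}.

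Next I would trace the specialization through the Pfaffian formula \eqref{Pf}. At $\beta=0$ each factor of the form $(1-\beta\widetilde{T}_i)^{\ell_\circ - i-\lambda_i+1}/(2-\beta\widetilde{T}_i)$ collapses to $1/2$, the factor $1/(1+\delta_i\delta_j(R_{ij}-\beta T_i))$ becomes $1/(1+\delta_i\delta_jR_{ij})$, and the Chern and Euler class contributions simplify to their Chow-theoretic analogues since $\gamma(\mathcal{W}_{\mathscr{P}^\pm},\mathcal{U}_{\mathscr{P}^\pm})$ is the square root of $c(\mathcal{W}^\vee_{\mathscr{P}^\pm};\beta)$, which at $\beta=0$ reduces to its ordinary Chern-class value. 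In the odd case the augmented row \eqref{aug} specializes in the same manner. These specializations are purely formal once Theorem \ref{thm:con} is in hand, so I would simply state them and denote the resulting expression by $Pf^{\diamondsuit}_\lambda(d(1),\ldots,d(r+1);0)$ as in the statement.

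There is essentially no obstacle here beyond bookkeeping; the only point that warrants a sentence of justification is that the specialization map $CK_*\to A_*$ preserves fundamental classes of Cohen-Macaulay subschemes of the expected codimension, which is exactly why the pure-codimension hypothesis is required. Once this is noted, the corollary follows immediately from Theorem \ref{thm:con}.
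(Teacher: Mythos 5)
Your proposal is correct and follows essentially the same route as the paper, which obtains this corollary immediately from Theorem \ref{thm:con} by setting $\beta=0$ and using the identification $CK_*(X)/(\beta=0)\cong A_*(X)$ under which the fundamental class in connective K-homology maps to the Chow fundamental class. The extra bookkeeping you describe (how the individual factors of the Pfaffian entries collapse at $\beta=0$) is harmless but not needed, since the statement is literally the Pfaffian expression evaluated at $\beta=0$.
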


Furthermore, we can find the numerical equivalence class of $V_{\bold{a}}^r(P)$ in the numerical group $N^*(\mathscr{P}^\pm,\mathbb{K})$ with coefficients in $\mathbb{K}$ or its cohomology class of $V_{\bold{a}}^r(P)$ in the singular cohomology $H^*(\mathscr{P}^\pm,\mathbb{C})$ with complex coefficients, as a corollary (Corollary \ref{Form2}) of Theorem \ref{thm:con}. Here $k$ is an arbitrary field of characteristic not equals to $2$. 

In fact, $d_j(i)$ can be specialized in those cohomology rings with $\beta=0$ as the following lemma. The rest of this section must be read at $\beta=0$. Let ${\Theta}$ be the theta divisor on $\mathrm{Pic}^{2g-2}(\widetilde{C})$. We use the convention that the cohomology or numerical equivalence class $\theta$ of $\Theta$ does not rely on the choice of the divisors $\widetilde{D}-a_iP$ assumed in \cite[318-319]{ACGH}. 

\begin{lemma}\label{Chrn}
Given sufficiently positive divisor $\widetilde{D}-a_iP$, we have
\begin{equation*}
d_j(i)=\dfrac{(\theta')^j}{j!}
\end{equation*}
where $\theta'$ is the cohomology class restricted to $\mathscr{P}^\pm$ of the class of the theta divisor $\Theta$ on $\mathrm{Pic}^{2g-2}(\widetilde{C})$.
\end{lemma}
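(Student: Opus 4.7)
The plan is to use the classical Poincar\'e formula for Chern classes of direct images of the Poincar\'e line bundle twisted by a sufficiently positive divisor, and then to verify that at $\beta=0$ the Euler-class contribution drops out on the Prym variety.

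First I would invoke the identification already established in Section \ref{sec4}, that triviality of $c(\mathcal{U}_{\mathscr{P}^\pm})$ combined with the maximal-isotropy relation $c(\mathcal{V}) = c(\mathcal{W}) c(\mathcal{W}^\vee)$ gives
\[
c(i) = c\bigl((\mathcal{W}_{r+1-i})^\vee_{\mathscr{P}^\pm}\bigr).
\]
This reduces the computation of each $c_j(i)$ to the Chern classes of the single pushforward bundle $\mathcal{W}_{r+1-i} = \nu_*(\mathcal{L}(\widetilde{D}-\lambda_i P))$. Next I would apply the Poincar\'e formula (cf.\ ACGH, Ch.\ VIII): for any divisor $E$ on $\widetilde{C}$ with $h^1(\widetilde{C}, L(E))=0$ for all $L \in \mathrm{Pic}^{2g-2}(\widetilde{C})$, Grothendieck--Riemann--Roch applied to $\nu$ yields $c(\nu_*(\mathcal{L}(E))) = e^{-\theta}$, and dually $c(\nu_*(\mathcal{L}(E))^\vee) = e^{\theta}$. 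The positivity hypothesis on $\widetilde{D}-\lambda_i P$ ensures the required vanishing, so $c_j((\mathcal{W}_{r+1-i})^\vee) = \theta^j/j!$ on $\mathrm{Pic}^{2g-2}(\widetilde{C})$; restricting to $\mathscr{P}^\pm$ replaces $\theta$ by $\theta'$ and gives $c_j(i) = (\theta')^j/j!$.

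It remains to show that $e_j(i) = 0$ on $\mathscr{P}^\pm$ at $\beta=0$. Recall $e_j(i)$ is supported in degree $j=\lambda_i$, where it is proportional to $c_{\lambda_i}(\mathcal{W}_{\mathscr{P}^\pm}/(\mathcal{W}_{r+1-i})_{\mathscr{P}^\pm})$. The key observation is that the non-rank part of $\mathrm{ch}(\nu_*(\mathcal{L}(E)))$ is universal in $\theta$, independent of $E$: applying this to both $\mathcal{W} = \nu_*(\mathcal{L}(\widetilde{D}))$ and $\mathcal{W}_{r+1-i}$ one obtains $c(\mathcal{W}) = c(\mathcal{W}_{r+1-i}) = e^{-\theta'}$ on $\mathscr{P}^\pm$ degree-by-degree (beyond which each Chern class vanishes either by the rank constraint on the bundle or by the nilpotency of $\theta'$). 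Hence $c(\mathcal{W}/\mathcal{W}_{r+1-i}) = c(\mathcal{W})/c(\mathcal{W}_{r+1-i}) = 1$, and in particular $c_{\lambda_i}(\mathcal{W}/\mathcal{W}_{r+1-i}) = 0$. It follows that $e_{\lambda_i}(i)=0$, so $d_j(i) = c_j(i) + (-1)^i e_j(i) = (\theta')^j/j!$ for every $j$.

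The hard part will be this last step: justifying cleanly that $c(\mathcal{W})$ and $c(\mathcal{W}_{r+1-i})$ coincide on $\mathscr{P}^\pm$ despite coming from distinct positive divisors with different ranks. This requires a careful handling of the Grothendieck--Riemann--Roch computation (where the positive-degree part of the Chern character is independent of the chosen twist) together with the specific convention for the theta class $\theta$ on $\mathrm{Pic}^{2g-2}(\widetilde{C})$ from ACGH pp.~318--319, which decouples $\theta$ from the particular choice of the auxiliary divisor $\widetilde{D}-a_i P$. Once this is pinned down, the collapse $d_j(i)=(\theta')^j/j!$ follows immediately from the two computations above.
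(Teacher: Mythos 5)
Your proposal is correct and follows essentially the same route as the paper's proof: reduce $c(i)$ to $c((\mathcal{W}_{r+1-i})^\vee_{\mathscr{P}^\pm})=e^{\theta'}$ via triviality of $c(\mathcal{U}_{\mathscr{P}^\pm})$, then kill the Euler-class term by showing $c_{\lambda_i}(\mathcal{W}_{\mathscr{P}^\pm}/(\mathcal{W}_{r+1-i})_{\mathscr{P}^\pm})=0$ because both $\mathcal{W}$ and $\mathcal{W}_{r+1-i}$ have total Chern class $e^{-\theta'}$ (the paper carries this out as the explicit alternating binomial sum $\sum_k(-1)^k\binom{j}{k}=0$, which is exactly your Whitney-quotient cancellation), with the independence of $\theta'$ from the twisting divisor handled by the same ACGH pp.~318--319 convention you cite.
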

\begin{proof}
 According to modulo numerical equivalence (as in \cite[Equation (4)]{Ma}) and using the Poincar\'e dual formula, the Chern class of $\mathcal{W}_i^\vee$ is given by 
 \begin{align}\label{eqn:chern}
 c_j((\mathcal{W}_i)_{\mathscr{P}^\pm}^\vee)=\dfrac{(\theta')^j}{j!}.
 \end{align}
 Since $c(\mathcal{W})=e^{-\theta'}$ and $c_i(\mathcal{U}_{\mathscr{P}^\pm})=0$ for all $i>0$ by \cite[Lemma 5]{CP95}, we get
\begin{align*}
d_j(i)=c_j((\mathcal{W}_{i})_{\mathscr{P}^\pm}^\vee)+(-1)^i\cdot(-1)^{\mathrm{dim}(\mathcal{U}\cap\mathcal{W})}\cdot c_j(\mathcal{W}_{\mathscr{P}^\pm}/(\mathcal{W}_{i})_{\mathscr{P}^\pm})
\end{align*}
for $j=\lambda_i$. In case of $j\neq \lambda_i$, $c_j(i)=c_j((\mathcal{W}_{i})_{\mathscr{P}^\pm}^\vee).$ 
Let us take $j=\lambda_i$. Then the $j$-th Chern class of $\mathcal{W}_{\mathscr{P}^\pm}/(\mathcal{W}_{i})_{\mathscr{P}^\pm}$ vanishes as 
\begin{align*}
c_j(\mathcal{W}_{\mathscr{P}^\pm}/(\mathcal{W}_{i})_{\mathscr{P}^\pm})&=\sum_{k=0}^{j}c_k(\mathcal{W}_{\mathscr{P}^\pm})\cdot c_{j-k}((\mathcal{W}_{i})_{\mathscr{P}^\pm}^\vee)\\
&=\sum_{k=0}^j \dfrac{(-1)^k\cdot(\theta')^k}{k!}\cdot\dfrac{(\theta')^{j-k}}{(j-k)!}=\sum_{k=0}^j\dfrac{(-1)^k}{k!}\cdot\dfrac{1}{(j-k)!}(\theta')^j\\
&=\dfrac{(\theta')^j}{j!}\sum_{k=0}^j\dfrac{(-1)^k\cdot j!}{k!\cdot(j-k)!}=\dfrac{(\theta')^j}{j!}\sum_{k=0}^j(-1)^k\cdot\binom{j}{k}\\
&=\dfrac{(\theta')^j}{j!}\cdot ((-1)+1)^j=0.
\end{align*}
Hence we arrive at 
\begin{equation*}
d_j(i)=\dfrac{(\theta')^j}{j!}
\end{equation*}
as the $j$th degree of $e^{\theta'}$. 
Since $d_j(i)$ is a multiple of $(\theta')^j$, the class $\left[V_{\mathbf{a}}^r(P)\right]$ can be expressed by $\gamma\cdot(\theta')^{|\lambda|}$ for a rational number $\gamma$ and $|\lambda|=\sum_{i=1}^{r+1}\lambda_i$.
\end{proof}

Let $\xi$ be the class of the theta divisor on $\mathscr{P}^\pm$ in $N^*(\mathscr{P}^\pm,\mathbb{K})$ or $H^*(\mathscr{P}^\pm,\mathbb{C})$. Then we have $\theta'=2\xi$ via \cite[p.342]{Mum}. By the above lemma, the class of $V_{\bold{a}}^r(P)$ becomes:

\begin{corollary}[$\beta=0$]\label{Form2}
Suppose that $V_{\bold{a}}^r(P)$ has the dimension equal to $\rho=g-1-|\lambda|$. Then the class of the pointed Brill-Noether loci for $\mathscr{P}^\pm$ is
\begin{align}
[V_{\bold{a}}^r(P)]&=\dfrac{1}{2^{\ell_\circ}}\prod_{i=1}^{\ell_\circ}\dfrac{1}{{\lambda_i!}}\prod_{i<j}\dfrac{\lambda_i-\lambda_j}{\lambda_i+\lambda_j}\cdot (2\xi)^{|\lambda|}
\end{align}
in either $N^*(\mathscr{P}^\pm,\mathbb{K})$ or $H^*(\mathscr{P}^\pm,\mathbb{C})$
\end{corollary}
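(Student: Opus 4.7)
The plan is to specialize Theorem \ref{thm:con} (equivalently Corollary \ref{cor2}) at $\beta=0$, substitute the explicit Chern class values from Lemma \ref{Chrn}, and then evaluate the resulting type-D Pfaffian via a classical Schur-Pfaffian identity. First I would start from
\[
[V_{\bold{a}}^r(P)] = Pf^{\diamondsuit}_\lambda(d(1),\ldots,d(\ell_\circ);0) \in A_*(\mathscr{P}^\pm),
\]
as guaranteed by Corollary \ref{cor2}. Setting $\beta=0$ reduces each factor $(1-\beta\widetilde{T}_i)^{\ell_\circ - i - \lambda_i + 1}$ to $1$, each $(2-\beta\widetilde{T}_i)^{-1}$ to $1/2$, and the raising-operator denominator $1+\delta_i\delta_j(R_{ij}-\beta T_i)$ to $1+\delta_i\delta_j R_{ij}$. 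Collecting the $1/2$ factors across a perfect matching of size $\ell_\circ/2$ (or the augmented row \eqref{aug} plus $(\ell_\circ-1)/2$ pairs in the odd case) yields the overall numerical prefactor $1/2^{\ell_\circ}$.

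Next I would apply Lemma \ref{Chrn}: only the $(\theta')^{\lambda_i}/\lambda_i!$ piece of $d_{\lambda_i}(i)$ survives, because the Chern class $c_{\lambda_i}(\mathcal{W}_{\mathscr{P}^\pm}/(\mathcal{W}_{r+1-i})_{\mathscr{P}^\pm})$ vanishes at the relevant degree (precisely the vanishing already established inside the proof of that lemma). Hence each entry of the $\ell_\circ \times \ell_\circ$ matrix $M^{\diamondsuit}$ simplifies to
\[
m^\diamondsuit_{ij}\big|_{\beta=0} = \frac{1}{4} \cdot \frac{1-\delta_i\delta_j R_{ij}}{1+\delta_i\delta_j R_{ij}} \cdot \frac{(\theta')^{\lambda_i}}{\lambda_i!}\cdot\frac{(\theta')^{\lambda_j}}{\lambda_j!},
\]
and analogously for the augmented entry in the odd case. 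Treating $(\theta')^{\lambda}/\lambda!$ as the symbolic output of the $R_{ij}$-action on exponents, the standard identity $\frac{1-R_{ij}}{1+R_{ij}}\cdot x_i^{\lambda_i}x_j^{\lambda_j} = \frac{\lambda_i-\lambda_j}{\lambda_i+\lambda_j}x_i^{\lambda_i}x_j^{\lambda_j}$ converts the raising-operator prefix into the scalar $\frac{\lambda_i-\lambda_j}{\lambda_i+\lambda_j}$.

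At this point the Pfaffian is that of a skew-symmetric matrix whose $(i,j)$-entry is a scalar multiple of $(\theta')^{\lambda_i+\lambda_j}$. The combinatorial core of the proof is Schur's classical Pfaffian identity
\[
Pf\!\left(\frac{\lambda_i-\lambda_j}{\lambda_i+\lambda_j}\right)_{1\le i,j\le \ell_\circ} = \prod_{i<j}\frac{\lambda_i-\lambda_j}{\lambda_i+\lambda_j}
\]
(together with its augmented variant when $\ell_\circ$ is odd), which assembles the pairwise prefactors into the stated product. Combining this with the $1/2^{\ell_\circ}$ pulled from the denominators and the $\prod_i 1/\lambda_i!$ coming from the Chern classes, and finally substituting $\theta' = 2\xi$ from \cite[p.342]{Mum}, delivers the claimed expression in $N^*(\mathscr{P}^\pm,\mathbb{K})$ and $H^*(\mathscr{P}^\pm,\mathbb{C})$.

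The main obstacle, in my view, is the bookkeeping of the signs $\delta_i = (-1)^i$ together with the parity factor $(-1)^{\ell_\circ + \dim(\mathcal{U}\cap\mathcal{W})}\gamma(\mathcal{W}_{\mathscr{P}^\pm},\mathcal{U}_{\mathscr{P}^\pm})$: one must verify that these conspire so that only the Chern-class term of $d(i)$ contributes at degree $\lambda_i$ (using $\gamma^2 = c(\mathcal{W}^\vee)$ and the degree-$\lambda_i$ vanishing from Lemma \ref{Chrn}), and that the residual signs in the raising-operator entries produce the unsigned Schur Pfaffian pattern rather than a twisted variant. The odd $\ell_\circ$ case further requires checking that the augmented row \eqref{aug} plays precisely the role of the extra Schur-Pfaffian row. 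Once this sign bookkeeping is in place, the remainder essentially recovers the De Concini-Pragacz computation \cite{CP95}, now carrying the vanishing-order data $\lambda_i = a_{r+1-i}$.
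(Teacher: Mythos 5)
Your proposal is correct and follows essentially the same route as the paper: specialize Corollary \ref{cor2} at $\beta=0$, use Lemma \ref{Chrn} so each entry reduces to $\tfrac14\,\frac{1-R_{ij}}{1+R_{ij}}\,\frac{(\theta')^{\lambda_i}}{\lambda_i!}\frac{(\theta')^{\lambda_j}}{\lambda_j!}$, evaluate the raising operators (your ``standard identity'' is exactly the binomial identity \eqref{eqn:id} the paper proves, valid for the factorial-weighted terms rather than bare monomials), and finish with Schur's Pfaffian identity from \cite[Appendix D]{FP} and $\theta'=2\xi$.
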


\begin{proof}
We know from Corollary \ref{cor2} that 
\begin{align*}
[V_{\bold{a}}^r(P)]&=Pf^{\diamondsuit}_\lambda(d(1),\ldots,d(\ell_\circ);0)\in A_*(\mathscr{P}^\pm).
\end{align*}
We assume $\ell_\circ$ even. With Lemma \ref{Chrn}, the right hand side becomes the Pfaffian of the $\ell_\circ\times \ell_\circ$ skew-symmetric matrix $(m^\dagger_{ij})$ for $\ell_\circ\geq j>i\geq 1$ where  
\begin{align}\label{Mij}
m^\dagger_{ij}&=\dfrac{1}{2^2}\dfrac{{(2\xi)}^{\lambda_i+\lambda_j}}{(\lambda_i+\lambda_j)!}\left(\binom{\lambda_i+\lambda_j}{\lambda_i}+2\sum_{u>0}(-1)^{u}\cdot\binom{\lambda_i+\lambda_j}{\lambda_i+u}\right),
\end{align}
with the skew-symmetric relations $m^\dagger_{ji} =-m^\dagger_{ij}$ and $m^\dagger_{ii} = 0$.
If $\ell_\circ$ is odd, the matrix is augmented by $m^\dagger_{0j}=d_{\lambda_j}$ for $j=1,\ldots,\ell_\circ$.

 Since 
 \[
 \sum_{k=0}^{\lambda_i}(-1)^k\binom{\lambda_i+\lambda_j}{k}+\sum_{u>0}^{\lambda_j}(-1)^{u+\lambda_i}\binom{\lambda_i+\lambda_j}{\lambda_i+u}=\sum_{j=0}^{\lambda_i+\lambda_j}(-1)^j\binom{\lambda_i+\lambda_j}{j}=0,
\] 
 we have
 \begin{align*}
 (-1)^{\lambda_i}\sum_{u>0}^{\lambda_j}(-1)^u\binom{\lambda_i+\lambda_j}{\lambda_i+u}&=-\sum_{k=0}^{\lambda_i}(-1)^k\binom{\lambda_i+\lambda_j}{k}=(-1)^{\lambda_i+1}\binom{\lambda_i+\lambda_j-1}{\lambda_i}.
 \end{align*}
 By canceling $(-1)^{\lambda_i}$, we get to 
 \begin{equation}\label{eqn:id}
 \sum_{u>0}^{\lambda_j}(-1)^u\binom{\lambda_i+\lambda_j}{\lambda_i+u}=-\binom{\lambda_i+\lambda_j-1}{\lambda_i}.
 \end{equation}
Plugging equation \eqref{eqn:id} to \eqref{Mij} gives 
\begin{align*}
m^\dagger_{ij}&=\dfrac{1}{2^2}\dfrac{{(2\xi)}^{\lambda_i+\lambda_j}}{\lambda_i!\lambda_j!}\left(\dfrac{\lambda_i-\lambda_j}{\lambda_i+\lambda_j}\right).
\end{align*}
Using \cite[Appendix D]{FP}, 
\[
\mathrm{Pf}\left(\dfrac{1}{2^2}\dfrac{{(2\xi)}^{\lambda_i+\lambda_j}}{\lambda_i!\lambda_j!}\cdot \dfrac{\lambda_i-\lambda_j}{\lambda_i+\lambda_j}\right)
=\dfrac{1}{2^{\ell_\circ}}\prod_{i=1}^{\ell_\circ}\dfrac{1}{{\lambda_i!}}\prod_{i<j}\dfrac{\lambda_i-\lambda_j}{\lambda_i+\lambda_j}(2\xi)^{|\lambda|}.
\qedhere
\]
\end{proof}

While working this paper, the author learned from private conversation with David Anderson that Corollary \ref{Form2} was be found independently in \cite[Theorem 1]{Tarasca}. To be rigorous, we take this occasion to provide a more detailed proof, as
the proof of \cite[Theorem 1]{Tarasca} was sketched.

\begin{remark}
When $\mathbf{a}=(0,\ldots,r)$, we can recover the formula for the Brill-Noether loci in Prym varieties \cite[Theorem 9]{CP95} with imposed vanishing orders $\mathbf{a}$ at a point $P\in \widetilde{C}$. In this case, if $r$ is even, we take a strict partition $\lambda=(r,\ldots,1)$ which is often denoted by $\rho_r$ in many literature including in the proof of \cite[Lemma 8]{CP95} and \cite[Introduction, pg 14]{PR}. Then by Theorem \ref{Form2} we have
\begin{equation}\label{a}
[V_{\bold{a}}^r(P)]=\dfrac{1}{2^{r}}\prod_{i=1}^{r}\dfrac{1}{{i!}}\prod_{j<i}\dfrac{i-j}{i+j}\cdot (2\xi)^{r(r+1)/2},
\end{equation}
and if $r$ is odd, we set $\lambda=(r,\ldots,1,0)$ by putting $\lambda_{r+1}=a_0$ to be zero. Then Theorem \ref{Form2} gives
\begin{equation}\label{b}
[V_{\bold{a}}^r(P)]=\dfrac{1}{2^{r}}\prod_{i=0}^{r}\dfrac{1}{{i!}}\prod_{j<i}\dfrac{i-j}{i+j}\cdot (2\xi)^{r(r+1)/2}.
\end{equation}
We note that \eqref{a} agrees with \eqref{b}, since ${1}/{0!}=1$ and $\prod_{0<i}\dfrac{i-0}{i+0}=1$.
\end{remark}

\section{Euler Characteristics}\label{sec5}
In this section we provide formulas for the Euler characteristic of the pointed Brill-Noether loci $V^r_{\bold{a}}(P)$ in the Prym variety $\mathscr{P}^{\pm}$ associated to a fixed sequence 
\[
\bold{a}=(0\leq a_0<\cdots<a_r\leq 2g-2).
\]
We employ Hirzebruch-Riemann-Roch to find the Euler characteristic of the Brill-Noether loci for Prym varieties. Since the Todd class of $\mathrm{Pic}^{2g-2}(\widetilde{\mathcal{{C}}})$ is trivial, we have 
\[\chi\left(\mathcal{O}_{V^r_{\bold{a}}(P)}\right)=\int_{\mathrm{Pic}^{2g-2}(\widetilde{{C}})}\mathrm{ch}\left(\left[\mathcal{O}_{V^r_{\bold{a}}(P)}\right]\right).\]
To compute the Euler characteristic, we use the following lemma showing that cohomology Chern classes coincide with K-theory Chern classes via the Chern character isomorphism.

\begin{lemma}[\cite{ACT21}]\label{lem:char}
For a rank $n$ vector bundle $E$, if $ch(E)_i=0$ for $i>1$, then $ch(c_i^K(E))=c_i(E),$ where $c^K$ is the K-theory Chern classes. 
\end{lemma}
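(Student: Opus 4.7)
The plan is to apply the splitting principle to reduce the statement to a symmetric-function identity in the Chern roots $\alpha_1,\ldots,\alpha_n$ of $E$. Under this reduction, the hypothesis $ch(E)_i=0$ for $i>1$ translates into the statement that the power sums $p_i:=\sum_{j=1}^n \alpha_j^i$ vanish for all $i\geq 2$, while $p_1=c_1(E)$. My first step would be to apply Newton's identities to this vanishing to conclude that $c_k(E)=e_k(\alpha_1,\ldots,\alpha_n)=c_1(E)^k/k!$ for every $k$, so that the entire total Chern class of $E$ is governed by its first Chern class alone.

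Next, I would use the splitting principle in K-theory to write $c_k^K(E)$ as the $k$-th elementary symmetric polynomial in the virtual classes $\beta_j:=1-L_j^\vee$, where $E=L_1\oplus\cdots\oplus L_n$ after the splitting. Applying the Chern character then yields
\[
ch(c_k^K(E))=e_k(1-e^{-\alpha_1},\ldots,1-e^{-\alpha_n}).
\]
The key observation is that, expanding $\sum_{j=1}^n (1-e^{-\alpha_j})^m$ as a polynomial in the $\alpha_j$'s, every resulting monomial collapses to a single power sum $p_s$ with $s\geq m$. Hence the hypothesis forces $\sum_j\beta_j^m=0$ for all $m\geq 2$, while $\sum_j \beta_j=p_1=c_1(E)$. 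A second application of Newton's identities to the sequence $(\beta_j)$ then gives $e_k(\beta_1,\ldots,\beta_n)=c_1(E)^k/k!$, which coincides with $c_k(E)$ as computed in the first step, finishing the proof.

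The step I expect to be the main obstacle is pinning down the precise convention for the K-theoretic Chern class $c^K$ used in the statement (whether normalized as $e_k(1-L_j^\vee)$, $e_k(L_j-1)$, or through the $\lambda$-operations with a specific sign), since the Chern character computation above depends sensitively on that choice. Once the convention is fixed to match the one governing the K-theoretic Chern classes elsewhere in the paper, the argument reduces entirely to the two Newton's-identity applications above and is essentially formal.
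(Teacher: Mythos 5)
Your argument is correct, and it is worth noting that the paper itself does not prove this lemma at all: it is quoted verbatim from \cite{ACT21}, so there is no internal proof to compare against. Your splitting-principle/Newton's-identities argument is a legitimate self-contained justification. The two applications are both sound: the hypothesis $ch(E)_i=0$ for $i>1$ is exactly $p_i=0$ for $i\geq 2$ (since $ch(E)_i=p_i/i!$), giving $c_k(E)=c_1(E)^k/k!$; and since each $(1-e^{-\alpha_j})^m$ is a power series in the single root $\alpha_j$ beginning in degree $m$, the power sums of the classes $ch(1-[L_j^\vee])=1-e^{-\alpha_j}$ vanish in degrees $\geq 2$ and sum to $c_1(E)$ in degree $1$, so a second use of Newton's identities gives $ch(c_k^K(E))=c_1(E)^k/k!=c_k(E)$ (and for $k$ exceeding the rank both sides vanish). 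Your one stated worry, the normalization of $c^K$, is resolved by the conventions in force here: the paper works in Anderson's connective K-theory and specializes at $\beta=-1$, where the first Chern class of a line bundle $L$ is $1-[L^\vee]$, which is precisely the convention your computation assumes, and it is also the convention of \cite{ACT21}. One small point of rigor you may wish to make explicit is that the splitting principle is applied compatibly in K-theory and cohomology (pulling back to a common flag bundle on which $ch$ remains injective on the image), and that all the power series involved are finite sums by nilpotence of positive-degree classes; with those remarks your proof is complete and, if anything, more detailed than the literature it would replace.
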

It is from Lemma \ref{lem:char} and \eqref{eqn:chern} that
\[
 \mathrm{ch}\left(c^K_j((\mathcal{W}_i\right)_{\mathscr{P}^\pm}^\vee))=\dfrac{(\theta')^j}{j!}
 \]
and thus
\[
 \mathrm{ch}\left(d^K_j(i)\right)=\dfrac{(\theta')^j}{j!}.
\]

The rest of this section is devoted to the proof of our main Theorem \ref{thm:Euler}. Lemma \ref{lem:char} and Corollary \ref{Form1} have a major role in obtaining the formula for the Euler characteristic. Before we actually prove the theorem, further notations must be introduced. Given any nonnegative integer sequence $\mathbf{v}=\{v_i\}_{i=1}^{\ell_\circ}$, we define $\left\{g_m^{i,j}\right\}_{m\geq 0}$ by
\[
g_0^{i,j}=\dfrac{1}{(\lambda_i+v_i)!(\lambda_j+v_j)!}+\sum_{\ell>0}(-1)^\ell\dfrac{2}{(\lambda_i+\ell+v_i)!(\lambda_j-\ell+v_j)!}
\]
and
\begin{align*}
g_m^{i,j}&=(-1)^m\left(\dfrac{1}{(\lambda_i+m+v_i)!(\lambda_j+v_j)!}\right.\\
&\left.+\sum_{\ell>0}(-1)^\ell\left(\binom{\ell+m-1}{m}+\binom{\ell+m}{m}\right)\dfrac{1}{(\lambda_i+\ell+m+v_i)!(\lambda_j-\ell+v_j)!}\right)\;\text{for}\; m>0\\
\end{align*}
 for $\ell_\circ \geq j > i \geq 1$, and $g_m^{ii}=0$, $g_m^{ji}=-g_m^{ij}$. We denote by $S_{2n}$ the symmetric group of degree $2n$. For $\sigma\in S_{2n}$, we define $\hat{f}(\sigma):=\sum_{j=1}^{n}f_{\sigma(2j-1),\sigma(2j)}$ for a nonnegative double sequence $\mathbf{f}:=\{f_{i,j}\}_{1\leq i,j\leq 2n}$, and let $A_i^\sigma=\left\{\bold{f}\;|\;\hat{f}(\sigma)=i\right\}$. In particular, $f_{0,j}=0=f_{j,0}$ and $g_{0}^{0,j}=\dfrac{1}{(\lambda_j+v_j)!}=-g_{0}^{j,0}$ for all $j=1,\ldots,\ell_\circ$ if necessary. 


\begin{proof}[Proof of Theorem \ref{thm:Euler}]
Let $P$ be a point on $\widetilde{C}$. We first compute $\mathrm{ch}\left(\left[\mathcal{O}_{V^r_{\mathbf{a}}(P)}\right]\right)$. We know from Corollary \ref{Form1} that
\[
[\mathcal{O}_{V_{\bold{a}}^r(P)}]=Pf^{\diamondsuit}_\lambda(d(1),\ldots,d(\ell_\circ);-1)\in K_\circ(\mathscr{P}^\pm).
\]
Here we assume that $\ell_\circ$ is even throughout this proof unless specified. If $\ell_\circ$ is odd, we use the augmented matrix for the Pfaffian formula with \eqref{aug} evaluated at $\beta=-1$ as defined. 

Then by the virtue of Lemma \ref{lem:char}, taking $[\mathcal{O}_{V_{\bold{a}}^r(P)}]$ under the Chern character isomorphism replaces the K-theory Chern classes involved in the Pfaffian formula by cohomology Chern classes. 

According to Lemma \ref{Chrn} and with the specialization at $\beta=-1$, $\mathrm{ch}\left(\left[\mathcal{O}_{V^r_{\mathbf{a}}(P)}\right]\right)$ is equal to the Pfaffian $\mathrm{Pf}(M^\star)$ where $M^\star=(m^\star_{ij})$ is the skew-symmetric matrix whose entries are
\begin{align}\label{Mij}
m^\star_{ij}&=\dfrac{1-R_{ij}}{1+R_{ij}+ T_i}\cdot \dfrac{(1+ T_i)^{{\ell_\circ}-i-\lambda_{i}+1}}{2+ T_i}\cdot \dfrac{(1+ T_j)^{{\ell_\circ}-j-\lambda_j+1}}{2+ T_j}\cdot d_{\lambda_i}(i)d_{\lambda_j}(j)
\end{align}
 for $\ell_\circ\geq j>i\geq 1$ with relations $m^\star_{ji} =-m^\star_{ij}$ and $m^\star_{ii} = 0$. As in \cite[Pg. 460]{A}, we can unfold $m^\star_{ij}$ by writing
\begin{equation}\label{com}
m^\star_{ij}=\dfrac{(1+ T_i)^{{\ell_\circ}-i-\lambda_{i}+1}}{2+ T_i}\cdot \dfrac{(1+ T_j)^{{\ell_\circ}-j-\lambda_j+1}}{2+T_j}\cdot m^\#_{ij},
\end{equation}
where 
\begin{align*}
m^\#_{ij}=&d_{\lambda_i}(i)d_{\lambda_j}(j)+\sum_{m>0}(-1)^md_{\lambda_i+m}(i)d_{\lambda_j}(j)\\
&+\sum_{\ell>0}\sum_{m\geq 0}(-1)^\ell\left(\binom{\ell+m-1}{m}+\binom{\ell+m}{m}\right)(-1)^md_{\lambda_i+\ell+m}(i)d_{\lambda_j-\ell}(j).
\end{align*}

Now we expand the operators $\dfrac{(1+ T_i)^{\ell_\circ-i-\lambda_{i}+1}}{2+ T_i}$ in powers of $T_i$. Recall that $s_i=\ell_\circ-i-\lambda_{i}+1$ for $i=1,\ldots,\ell_\circ$. In fact, one can write 
\begin{align*}
\dfrac{(1+ T_i)^{s_i}}{1+(1+ T_i)}&=(1+ T_i)^{s_i}\cdot\sum_{u_i\geq 0}^\infty(-(1+T_i))^{u_i}=\sum_{u_i\geq 0}^\infty(-1)^{u_i}(1+T_i)^{u_i+s_i}\\
&=\sum_{u_i\geq 0}^\infty (-1)^{u_i}\sum_{v_i\geq 0}\binom{u_i+s_i}{v_i}T_i^{v_i}
\end{align*}
We note that $s_i$ can be negative, and the last equality follows from the binomial series. Then $\mathrm{Pf}(M^\star)$ can be written as
\begin{align*}
\mathrm{Pf}(M^\star)=\sum_{u_i, v_i\geq 0}\left(\prod_{i=1}^{\ell_\circ}(-1)^{u_i}\binom{u_i+s_i}{v_i}\right)\mathrm{Pf}(M^\natural)
\end{align*}
by linearity where $M^\natural=(m^\natural_{ij})$ is a $\ell_\circ\times\ell_\circ$ skew-symmetric matrix with entries
\begin{equation}\label{Mij}
m^\natural_{ij}=T_i^{v_i}T_j^{v_j}m^\#_{ij}.
\end{equation}

Since the raising operator $T_i$ increasing the index of $d_{\lambda_i}(i)$, $T_id_{\lambda_i}(i)=d_{\lambda_i+1}(i)$, we obtain


\begin{align*}
m^\natural_{ij}&=(\theta')^{\lambda_i+\lambda_j+v_i+v_j}\cdot\left(\dfrac{1}{(\lambda_i+v_i)!(\lambda_j+v_j)!}+\sum_{m>0}(-1)^m\dfrac{(\theta')^m}{(\lambda_i+m+v_i)!(\lambda_j+v_j)!}\right.\\
&\left.+\sum_{\ell>0}\sum_{m\geq 0}(-1)^\ell\left(\binom{\ell+m-1}{m}+\binom{\ell+m}{m}\right)(-1)^m\dfrac{(\theta')^m}{(\lambda_i+\ell+m+v_i)!(\lambda_j-\ell+v_j)!}\right),\\
\end{align*}
with Lemma \ref{Chrn}.

Therefore the Euler characteristic is given by 
\begin{align*}
\chi\left(\mathcal{O}_{V^r_{\bold{a}}(P)}\right)=
\sum_{u_i, v_i\geq 0}\left(\prod_{i=1}^{\ell_\circ}(-1)^{u_i}\binom{u_i+s_i}{v_i}\right)\int_{\mathrm{Pic}^{2g-2}(\mathcal{\widetilde{C}})}\mathrm{Pf}(M^\natural).
\end{align*}
\noindent Let us expand $\mathrm{Pf}(M^\natural)$ in powers of $\theta'$. Indeed, the entries of $M^\natural$ can be expressed as 
\begin{equation}\label{ent}
m^\natural_{i,j}=(\theta')^{\lambda_i+\lambda_j+v_i+v_j}\cdot\sum_{f_{i,j}\geq 0}g_{f_{i,j}}^{i,j}(\theta')^{f_{i,j}}.
\end{equation}

Using the definition of Pfaffian as in \cite[Appendix D]{FP}, we completely expand $Pf(M^\natural)$ in the powers of $\theta'$ with entries \eqref{ent} of $M^\natural$.
In fact, for $\sigma\in S_{\ell_\circ}$, we consider $\hat{f}(\sigma)$ for the powers $\bold{f}:=\{f_{i,j}\}_{1\leq i<j\leq \ell_\circ}$ of $\theta'$ in $m^\natural_{i,j}$ such that $f_{ij}=f_{ji}$.
Since $\ell_\circ$ is even, the Pfaffian $\mathrm{Pf}(M^\natural)$ is given by
\begin{align*}
\mathrm{Pf}(M^\natural)&=\dfrac{(\theta')^{|\lambda|+|{\mathbf{v}}|}}{2^{\ell_\circ/2}(\ell_\circ/2)!}\sum_{\sigma\in S_{\ell_\circ}}\sum_{i\geq0}\sum_{\bold{f}\in A_i^\sigma}\mathrm{sgn}(\sigma)\prod_{j=1}^{\ell_\circ/2}g_{f_{\sigma(2j-1),\sigma(2j)}}^{\sigma(2j-1),\sigma(2j)}(\theta')^i.
\end{align*}
In particular, if $\ell_\circ$ is odd, the skew-symmetric matrix $M^\star=(m^\star_{ij})$ is augmented by 
\[
m^\star_{0,j}=\dfrac{(1+T_j)^{\ell_\circ-j-\lambda_j+1}}{(2+T_j)}d_{\lambda_j}(j)
\]
for $j=1,\ldots,\ell_\circ$. So, $M^\natural$ is augmented by 
\begin{equation*}
m^\natural_{0j}=(\theta')^{\lambda_j+v_j}\cdot\dfrac{1}{(\lambda_j+v_j)!}.
\end{equation*}
Then the Pfaffian $\mathrm{Pf}(M^\natural)$ of the augmented matrix $M^\natural$ becomes
\begin{align*}
\mathrm{Pf}(M^\natural)&=\dfrac{(\theta')^{|\lambda|+|{\mathbf{v}}|}}{2^{(\ell_\circ+1)/2}((\ell_\circ+1)/2)!}\sum_{\sigma\in S_{\ell_\circ+1}}\sum_{i\geq0}\sum_{\bold{f}\in A_i^\sigma}\mathrm{sgn}(\sigma)\prod_{j=1}^{(\ell_\circ+1)/2}g_{f_{\sigma(2j-1),\sigma(2j)}}^{\sigma(2j-1),\sigma(2j)}(\theta')^i.
\end{align*}
for $\sigma\in S_{\ell_\circ+1}$ permuting $\{0,1,\ldots, \ell_\circ\}$.

Replacing $\theta'$ by $2\xi$ and using the Poincar\'e formula $\int \xi^\kappa=\kappa!$ conclude the theorem.
\end{proof}

\begin{conflict}

The author states that there is no conflict of interest.
\end{conflict}

\begin{data availability}

Data sharing not applicable to this article as no datasets were generated or analyzed during the current study.
\end{data availability}

\begin{acknowledgments}
We are grateful to David Anderson for his encouragement, invaluable comments and suggestions. We wishes to thank William Graham for helpful discussions. We also would like to express our gratitude to the Department of Mathematics at University of Georgia for various supports. We also greatly thank the anonymous referee for many constructible suggestions and comments which improve this manuscript significantly. 
\end{acknowledgments}

\bibliographystyle{amsplain}

\begin{bibdiv}
\begin{biblist}

\bib{A}{article}{
   author={Anderson, David},
   title={$K$-theoretic Chern class formulas for vexillary degeneracy loci},
   journal={Adv. Math.},
   volume={350},
   date={2019},
   pages={440--485},
}
\bib{A19}{article}{
   author={Anderson, David},
   title={Corrigendum to ``$K$-theoretic Chern class formulas for vexillary
   degeneracy loci'' [Adv. Math. 350 (2019) 440--485]},
   journal={Adv. Math.},
   volume={356},
   date={2019},
   pages={106812, 3},
}	
\bib{ACT21}{article}{
   author={D. Anderson},
   author={L. Chen},
   author={N. Tarasca},
   title={$K$-classes of Brill-Noether loci and a determinantal formula},
   journal={IMRN.},
   date={2021},
}
%
\bib{ACGH}{book}{
   author={Arbarello, E.},
   author={Cornalba, M.},
   author={Griffiths, P. A.},
   author={Harris, J.},
   title={Geometry of algebraic curves. Vol. I},
   series={Grundlehren der mathematischen Wissenschaften [Fundamental
   Principles of Mathematical Sciences]},
   volume={267},
   publisher={Springer-Verlag, New York},
   date={1985},
   pages={xvi+386},
}

\bib{Cai}{article}{
   author={Cai, Shuang},
   title={Algebraic connective $K$-theory and the niveau filtration},
   journal={J. Pure Appl. Algebra},
   volume={212},
   date={2008},
   number={7},
   pages={1695--1715},
}

\bib{CLPT}{article}{
   author={Chan, Melody},
   author={L\'{o}pez Mart\'{\i}n, Alberto},
   author={Pflueger, Nathan},
   author={Teixidor i Bigas, Montserrat},
   title={Genera of Brill-Noether curves and staircase paths in Young
   tableaux},
   journal={Trans. Amer. Math. Soc.},
   volume={370},
   date={2018},
   number={5},
   pages={3405--3439},
}

\bib{CP95}{article}{
   author={De Concini, Corrado},
   author={Pragacz, Piotr},
   title={On the class of Brill-Noether loci for Prym varieties},
   journal={Math. Ann.},
   volume={302},
   date={1995},
   number={4},
   pages={687--697},
}
\bib{DCL}{article}{
   author={De Concini, C.},
   author={Lakshmibai, V.},
   title={Arithmetic Cohen-Macaulayness and arithmetic normality for
   Schubert varieties},
   journal={Amer. J. Math.},
   volume={103},
   date={1981},
   number={5},
   pages={835--850},
}

\bib{DL}{article}{
   author={Dai, Shouxin},
   author={Levine, Marc},
   title={Connective algebraic $K$-theory},
   journal={J. K-Theory},
   volume={13},
   date={2014},
   number={1},
   pages={9--56},
}
%
%

\bib{EH}{article}{
   author={Eisenbud, David},
   author={Harris, Joe},
   title={The Kodaira dimension of the moduli space of curves of genus $\geq
   23$},
   journal={Invent. Math.},
   volume={90},
   date={1987},
   number={2},
   pages={359--387},
}

\bib{Farkas}{article}{
   author={Farkas, Gavril},
   title={Prym varieties and their moduli},
   conference={
      title={Contributions to algebraic geometry},
   },
   book={
      series={EMS Ser. Congr. Rep.},
      publisher={Eur. Math. Soc., Z\"{u}rich},
   },
   date={2012},
   pages={215--255},
}
\bib{FP}{book}{
   author={Fulton, William},
   author={Pragacz, Piotr},
   title={Schubert varieties and degeneracy loci},
   series={Lecture Notes in Mathematics},
   volume={1689},
   note={Appendix J by the authors in collaboration with I.
   Ciocan-Fontanine},
   publisher={Springer-Verlag, Berlin},
   date={1998},
   pages={xii+148},
}

\bib{FaRa}{article}{
 Author = {Farkas, H. M.}
 Author={ Rauch, H. E.},
 Title = {Period relations of {Schottky} type on {Riemann} surfaces},
 FJournal = {Annals of Mathematics. Second Series},
 Journal = {Ann. Math. (2)},
 Volume = {92},
 Pages = {434--461},
 Year = {1970},
}

\bib{H82}{article}{
   author={Harris, Joe},
   title={Theta-characteristics on algebraic curves},
   journal={Trans. Amer. Math. Soc.},
   volume={271},
   date={1982},
   number={2},
   pages={611--638},
}	

\bib{HIMN}{article}{
   author={Hudson, Thomas},
   author={Ikeda, Takeshi},
   author={Matsumura, Tomoo},
   author={Naruse, Hiroshi},
   title={Degeneracy loci classes in $K$-theory---determinantal and Pfaffian
   formula},
   journal={Adv. Math.},
   volume={320},
   date={2017},
   pages={115--156},
}

\bib{HIMN20}{article}{
   author={Hudson, Thomas},
   author={Ikeda, Takeshi},
   author={Matsumura, Tomoo},
   author={Naruse, Hiroshi},
   title={Double Grothendieck polynomials for symplectic and odd orthogonal
   Grassmannians},
   journal={J. Algebra},
   volume={546},
   date={2020},
   pages={294--314},
}

\bib{KL}{article}{
   author={Kleiman, Steven L.},
   author={Laksov, Dan},
   title={Another proof of the existence of special divisors},
   journal={Acta Math.},
   volume={132},
   date={1974},
   pages={163--176},
}

\bib{Ma}{article}{
   author={Mattuck, Arthur},
   title={On symmetric products of curves},
   journal={Proc. Amer. Math. Soc.},
   volume={13},
   date={1962},
   pages={82--87},
}

\bib{M71}{article}{
   author={Mumford, David},
   title={Theta characteristics of an algebraic curve},
   journal={Ann. Sci. \'{E}cole Norm. Sup. (4)},
   volume={4},
   date={1971},
   pages={181--192},
}
\bib{Mum}{article}{
   author={Mumford, David},
   title={Prym varieties. I},
   conference={
      title={Contributions to analysis (a collection of papers dedicated to
      Lipman Bers)},
   },
   book={
      publisher={Academic Press, New York},
   },
   date={1974},
   pages={325--350},
}

\bib{PP}{article}{
   author={Parusi\'{n}ski, Adam},
   author={Pragacz, Piotr},
   title={Chern-Schwartz-MacPherson classes and the Euler characteristic of
   degeneracy loci and special divisors},
   journal={J. Amer. Math. Soc.},
   volume={8},
   date={1995},
   number={4},
   pages={793--817},
}

\bib{PR}{article}{
   author={Pragacz, P.},
   author={Ratajski, J.},
   title={Formulas for Lagrangian and orthogonal degeneracy loci;
   $\mathcal{Q}$-polynomial approach},
   journal={Compositio Math.},
   volume={107},
   date={1997},
   number={1},
   pages={11--87},
}

\bib{SJ}{article}{
 author = {Schottky, F.},
 author = {Jung, H.},
 title = {Neue {S{\"a}tze} {\"u}ber {Symmetralfunktionen} und die \emph{Abel}schen Funktionen der \emph{Riemann}schen {Theorie}.},
 FJournal = {Sitzungsberichte der K{\"o}niglich Preussischen Akademie der Wissenschaften},
 Journal = {Berl. Ber.},
 Volume = {1909},
 Pages = {282--297, 732--750},
 Year = {1909},
}

\bib{Tarasca}{article}{
author={Nicola Tarasca},
      title={A pointed Prym-Petri Theorem}, 
      journal={arXiv:2202.05284, to appear Trans. Amer. Math. Soc.},
        date={2022},
 }

\bib{W85}{article}{
   author={Welters, Gerald E.},
   title={A theorem of Gieseker-Petri type for Prym varieties},
   journal={Ann. Sci. \'{E}cole Norm. Sup. (4)},
   volume={18},
   date={1985},
   number={4},
   pages={671--683},
}
\bib{Wi}{article}{
 Author = {Wirtinger, W.},
 Title = {Untersuchungen {\"u}ber {Thetafunctionen}.},
 Year = {1895},
 journal = {Leipzig. {B}. {G}. {Teubner}.},

}

\end{biblist}
\end{bibdiv}


\end{document}